\newcommand{\G}{\mathbb{G}}
\newcommand{\B}{\mathbb{B}}
\newtheorem{thm}{Theorem}[section]
\newtheorem{lem}[thm]{Lemma}
\newtheorem{prop}[thm]{Proposition}
\newtheorem{df}[thm]{Definition}
\newtheorem{remark}[thm]{Remark}
\title{The minimal spectral radius with given independence number}
\author{Jinwon Choi\thanks{Department of Mathematics and Research Institute of Natural Sciences, Sookmyung Women's University, Seoul, 04310, Korea. jwchoi@sookmyung.ac.kr.} \, and
	Jooyeon Park\thanks{Department of Mathematics, Sookmyung Women's University, Seoul, 04310, Korea. yeonpark@sookmyung.ac.kr.}}
\date{\today}
\begin{document}

\maketitle

\begin{abstract} 
In this paper, we determine the graphs which have the minimal spectral radius among all the connected graphs of order $n$ and the independence number $\lceil\frac{n}{2}\rceil-1.$\\

\noindent\textbf{Keywords:} Spectral radius, Independence number, Bicyclic graph\\
\noindent\textbf{AMS subject classification :} 05C50
\end{abstract}

\section{Introduction}
Let $G$ be a simple, connected, and undirected graph of order $n$ with vertex set $V(G)$ and edge set $E(G)$. For a graph $G$, a vertex subset $S$ is \emph{independent} if the induced subgraph $G[S]$ has no edges. The maximum size of an independent set in $G$ is called the \emph{independence number} of $G$ and denoted by $\alpha(G)$. 
For the vertex set $\{v_1,v_2,\ldots,v_n\}$ of $G$, the \emph{adjacency matrix} $A(G)=(a_{ij})$ of $G$ is defined as the $n\times n$ matrix whose $ij$-entry is $1$ if $v_i$ and $v_j$ are adjacent or $0$ otherwise. 
Since $A(G)$ a real symmetric matrix, all its eigenvalues are real. The largest eigenvalue of $A(G)$ is called the \emph{spectral radius} of $G$ and denoted by $\rho(G)$. By Perron-Frobenius Theorem, $\rho(G)$ is simple and positive.

Many studies about the relation between the spectral radius and the independence number have been done. In particular, a bound of spectral radius and the classification of the corresponding extremal graphs are important problems. In \cite{biblock}, Das and Mohanty gave an upper bound for the spectral radius of a bi-block graphs with given independence numbers, where a block of a graph is a maximal connected subgraph having no cut-vertex and a \emph{bi-block graph} is a connected graph each of whose blocks is a complete bipartite graph.
Lou and Guo \cite{sr} extended the result of \cite{biblock} and proved that among all bipartite graphs with given independence number $\alpha$, the maximum spectral radius is uniquely attained by the complete bipartite graph $K_{\alpha, n-\alpha}$. 

On the other hand, determining the graphs with the minimum spectral radius among connected graphs with given independence number is considered to be a tough problem. (\cite[\S 4.4]{de}). It has been studied for some cases. A graph with minimum spectral radius among a given class of graphs is called a \emph{minimizer graph}. Let $\G_{n,\alpha}$ be the set of simple connected graphs of order $n$ with independence number $\alpha$. 
In \cite{xhs}, Xu, Hong, Shu, and Zhai determined the minimizer graphs with the independence number $\alpha=1,2,\lceil\frac{n}{2}\rceil,\lceil\frac{n}{2}\rceil+1,n-3,n-2,n-1$. Du and Shi in \cite{ds} determined the minimizer graph for $\alpha=3,4$ and $n=k\alpha$ for some integer $k$ and Jin and Zhang in \cite{kn} extended results for all $\alpha$ and $n=k\alpha$.
In \cite{sr}, Lou and Guo proved that the minimizer graph in $\G_{n, \alpha}$ must be a tree if $\alpha\geq\lceil\frac{n}{2}\rceil$. They also determined the extremal graphs when $\alpha=n-4$.
Later, Hu, Huang and Lou \cite{msr} gave a construction of the minimizer graphs for $\alpha\geq\lceil\frac{n}{2}\rceil$.

In this paper, we determine the minimizer graphs when $\alpha = \lceil\frac{n}{2}\rceil-1$. To state our main theorem, we fix notations. Let $C_n$ be the cycle of length $n$ and let $P_n$ be the path of length $n$. Let $B(m,p,q)$ be the graph obtained by attaching $C_m$ and $C_q$ at each end vertex of the path $P_p$. (See Figure \ref{Fig:PCB}) The main theorem of this paper is the following. 

\begin{thm}\label{mainthm}
Let $G$ be the minimizer graph in $\displaystyle \G_{n,\lceil\frac{n}{2}\rceil-1}$ where $n\geq 7$ and let $k=\lceil\frac{n}{3}\rceil$. Then
\[
G \cong \begin{cases}
			C_n, & \text{if $n$ is odd}\\
            B(k+1,k-1,k+1), & \text{if $n\equiv 0\ (mod~6)$}\\
            B(k,k,k), &  \text{if $n\equiv 2\ (mod~6)$}\\
            B(k-1,k+1,k-1), &  \text{if $n\equiv 4\ (mod~6)$}
            \end{cases}
\]
\end{thm}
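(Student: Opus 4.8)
The strategy is to collapse the problem to a one‑parameter spectral optimization by exploiting the independence number as a rigid structural constraint, and only then to carry out the eigenvalue comparison.

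First I would record the bipartite obstruction. Every tree on $n$ vertices is bipartite, so its larger color class gives $\alpha\geq\lceil n/2\rceil>\lceil n/2\rceil-1$; hence the minimizer $G$ is never a tree and therefore contains a cycle, forcing $\rho(G)\geq 2$. When $n$ is odd this finishes the proof: $C_n$ satisfies $\alpha(C_n)=(n-1)/2=\lceil n/2\rceil-1$ and $\rho(C_n)=2$, while by Smith's classification of connected graphs with spectral radius at most $2$ the only non‑tree attaining $\rho=2$ is a cycle, and an order‑$n$ cycle is $C_n$. Thus the whole content of the theorem lies in the even case, where I must exhibit a graph with $\rho>2$.

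For even $n$ the first structural step is to exclude unicyclic graphs: deleting any vertex $v$ lying on the unique cycle of a connected unicyclic $G$ leaves a forest on $n-1$ vertices, whose independence number is at least $\lceil(n-1)/2\rceil=n/2$, and this set, being independent in an induced subgraph, is independent in $G$; hence $\alpha(G)\geq n/2>n/2-1$. So the minimizer has cyclomatic number at least $2$. I would then show it is exactly bicyclic and of the dumbbell shape $B(m,p,q)$ with \emph{both} cycles odd. A figure‑eight base carries a vertex of degree $4$ and a $\theta$‑base has its two degree‑$3$ vertices adjacent or joined by a short path; both, together with any pendant trees, I expect to be beaten in spectral radius by a dumbbell of the same order. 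The parities are pinned by a direct computation: decomposing a maximum independent set of $B(m,p,q)$ into its parts on each cycle and on the internal path, an even (hence bipartite) cycle contributes exactly half its vertices and forces $\alpha=n/2$, whereas two odd cycles each lose one vertex and, with an odd connecting path, give $\alpha=s_1+s_2+t=n/2-1$ where $m=2s_1+1$, $q=2s_2+1$, $p=2t+1$. The admissible graphs are therefore precisely the dumbbells with $m,p,q$ odd and $m+p+q=n+1$.

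It remains to minimize $\rho$ over this finite family. Writing $\rho=2\cosh\theta$ with $\theta>0$, the eigenvalue equation on each degree‑$2$ stretch is the linear recurrence with characteristic roots $e^{\pm\theta}$; using the reflection symmetry of each cycle about its junction vertex I would solve for the Perron vector on the two cycle‑arcs and on the path and impose the matching conditions at the two degree‑$3$ vertices, obtaining a single secular equation in $\theta$ and $(m,p,q)$. Minimizing $\theta$ subject to $m+p+q=n+1$ with all three odd then forces the lengths to be as equal as the parity allows: redistributing a unit between two unequal parts strictly decreases $\theta$, which simultaneously yields $m=q$ and the three residue cases $B(k+1,k-1,k+1)$, $B(k,k,k)$, $B(k-1,k+1,k-1)$, together with uniqueness. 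The main obstacle is the structural reduction, not this final optimization: since $\alpha$ does not behave monotonically under edge deletion (removing one edge can jump it from $n/2-1$ to $n/2$), I cannot merely sparsify to a dumbbell, and must instead show that each competing configuration either already violates $\alpha=\lceil n/2\rceil-1$ or is strictly beaten by an explicit dumbbell, controlling both quantities at once; the eigenvector matching reduces to monotonicity of an explicit transcendental equation and is comparatively routine.
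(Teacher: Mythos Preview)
Your odd-$n$ argument and the exclusion of trees and unicyclic graphs are correct and coincide with the paper's. The even case, however, has a genuine gap in the final optimization. You assert that ``redistributing a unit between two unequal parts strictly decreases $\theta$'', i.e.\ that $\rho(B(m,p,q))$ is Schur-type monotone in $(m,p,q)$. This is false as stated: the middle parameter $p$ (the path) and the outer parameters $m,q$ (the cycles) play asymmetric roles, so $\rho(B(\cdot,\cdot,\cdot))$ is not a symmetric function of its three arguments. Concretely, for $n\equiv 0\pmod 6$ the balanced odd multiset is $\{k{-}1,k{+}1,k{+}1\}$, but $B(k{+}1,k{-}1,k{+}1)$ and $B(k{+}1,k{+}1,k{-}1)$ are different graphs with different spectral radii, and one must prove $\rho(B(m,p,m))<\rho(B(m,m,p))$ for $m\ne p$ to pick the right one. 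The paper supplies exactly this missing ingredient (its Lemma~\ref{bmpm}), together with the equitable-partition identity $\rho(B(m,p,m))=\rho(P(m,p,m))$ (Lemma~\ref{pb}) which lets one transfer the genuinely symmetric monotonicity of $\rho(P(m,p,q))$ (Lemma~\ref{p&c}) back to the symmetric dumbbells. Your secular-equation setup would eventually reveal this asymmetry, but the proposal as written treats the three slots interchangeably and so does not yet establish which permutation of the balanced triple is optimal.

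There is also a softer gap in the structural reduction. You correctly flag that $\alpha$ is not monotone under edge deletion, so one cannot simply sparsify to a dumbbell inside $\G_{n,\lceil n/2\rceil-1}$; but you stop at ``I expect'' for why $\theta$-graphs, figure-eights, and graphs with pendant trees are beaten. The paper sidesteps the need to prove the minimizer is bicyclic: for an arbitrary $G\in\G_{n,\lceil n/2\rceil-1}$ it passes to a minimal bicyclic \emph{subgraph} (necessarily of type $P$, $C$, or $B$), invokes interlacing (Lemma~\ref{it}) to drop all other vertices, and then reinserts them one by one into an internal path via the Hoffman--Smith subdivision lemma (Lemma~\ref{ies}), each step strictly decreasing $\rho$. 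This produces an $n$-vertex $P(\cdot,\cdot,\cdot)$, $C(\cdot,\cdot)$ or $B(\cdot,\cdot,\cdot)$ with $\rho$ at most $\rho(G)$, after which the comparison lemmas above finish. Your plan would become a proof if you either carried out this interlacing/subdivision mechanism explicitly or gave an alternative argument with the same effect, and if you added the $p$-versus-$m,q$ comparison needed in the last step.
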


\begin{remark}
For completeness, we state results for $n\le 6$, which can be checked easily. The complete graph $K_3$ and $K_4$ are the only graph in $\G_{3,1}$ and $\G_{4,1}$, respectively. The minimizer graph in $\G_{5,2}$ is $C_5$ and in $\G_{6,2}$ is $B(3,1,3)$. 
\end{remark}

This paper is organized as follows. In Section 2, we review necessary results. In Section 3, we study the spectral radius of bicyclic graphs. In Section 4, we prove Theorem \ref{mainthm}. 
For undefined terms or notations of graph theory, see West~\cite{W}. For basic properties of spectral graph theory, see Brouwer and Haemers~\cite{sg} or Godsil and Royle~\cite{GR}.

\section{Preliminaries}
In this section, we introduce relevant tools and results.

\begin{lem}[Perron-Frobenius theorem]\label{pfc}
Let $G$ be a connected graph and $A$ be the adjacency matrix of $G$. Then we have the following. 
\begin{enumerate}
    \item The spectral radius $\rho(G)$ of $G$ is a positive simple eigenvalue of $A$
    \item There is a unique positive unit eigenvector of $A$ corresponding to $\rho(G)$. This vector is called the \emph{Perron vector} of $G$.
    \item If there exists a nonzero vector $y$ with $y\geq 0$ and a number $\sigma$ such that $Ay\leq \sigma y$ and $Ay\neq \sigma y$, then $y>0$ and $\rho(G)<\sigma$.
\end{enumerate}
\end{lem}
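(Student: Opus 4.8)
The plan is to deduce all three parts from a single structural fact about connected graphs: since $G$ is connected, between any two vertices there is a walk of length at most $n-1$, so the nonnegative matrix $(I+A)^{n-1}$ has strictly positive entries. I will combine this with the Rayleigh characterization $\rho(G)=\max_{\|x\|=1}x^{T}Ax$, which is available because $A$ is real symmetric, and with the elementary fact that $A$ has nonnegative entries.

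First I would establish the existence of a positive eigenvalue together with a nonnegative eigenvector. Let $v$ be a unit maximizer of the Rayleigh quotient, so $Av=\rho v$ with $\rho=\rho(G)$. Since $a_{ij}\ge 0$, the entrywise absolute value $|v|$ satisfies $|v|^{T}A|v|\ge v^{T}Av=\rho$ while $\||v|\|=1$, so $|v|$ is also a maximizer and hence $A|v|=\rho|v|$; set $w:=|v|\ge 0$, $w\ne 0$. Because $G$ has an edge, $\operatorname{tr}(A^{2})=2|E(G)|>0$ whereas $\operatorname{tr}(A)=0$, which forces a positive eigenvalue and so $\rho>0$. Applying $(I+A)^{n-1}$ to $w$ gives $(1+\rho)^{n-1}w=(I+A)^{n-1}w>0$, because a positive matrix sends a nonzero nonnegative vector to a strictly positive one; hence $w>0$. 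This yields the existence half of part (2) and feeds into part (1).

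Next I would prove simplicity. Let $u$ be any real eigenvector for $\rho$; the eigenspace is real since $A$ is symmetric. The absolute-value argument again shows $|u|$ is a nonnegative eigenvector, hence $|u|>0$, so $u$ has no zero coordinate. Comparing the two expressions $|(Au)_i|=\rho|u_i|$ and $(A|u|)_i=\rho|u_i|$ shows that the triangle inequality $|\sum_j a_{ij}u_j|\le\sum_j a_{ij}|u_j|$ is an equality for every $i$, which forces all $u_j$ with $a_{ij}>0$ to share one sign; connectivity then propagates a single sign across all coordinates, so $u$ is, up to sign, strictly positive. With $u>0$ and $w>0$ both eigenvectors for $\rho$, set $t^{*}=\min_i w_i/u_i$; then $w-t^{*}u\ge 0$ is an eigenvector for $\rho$ possessing a zero coordinate, so by the positivity argument above it must be the zero vector. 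Thus the eigenspace is one-dimensional, giving simplicity (part (1)) and the uniqueness of the positive unit eigenvector $w/\|w\|$ (part (2)).

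Finally, for the comparison criterion (3), suppose $y\ge 0$, $y\ne 0$, $Ay\le\sigma y$ and $Ay\ne\sigma y$. Pairing $\sigma y-Ay$ against the Perron vector $w>0$ and using symmetry, $w^{T}(\sigma y-Ay)=\sigma\,w^{T}y-\rho\,w^{T}y=(\sigma-\rho)\,w^{T}y$. The left side is strictly positive since $\sigma y-Ay\ge 0$ is nonzero and $w>0$, and $w^{T}y>0$, so $\sigma>\rho$. To get $y>0$, note that the nonnegative matrix $I+A$ preserves the order $\le$, so iterating $(I+A)y\le(1+\sigma)y$ gives $(I+A)^{n-1}y\le(1+\sigma)^{n-1}y$; the left side is strictly positive and $1+\sigma>0$, whence $y>0$. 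The main obstacle is the simplicity step, where the equality case of the triangle inequality must be combined carefully with connectivity to propagate a single sign; once positivity of $(I+A)^{n-1}$ is in hand, the remaining verifications are routine.
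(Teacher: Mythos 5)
The paper does not prove this lemma at all: it is stated as the classical Perron--Frobenius theorem and implicitly deferred to the standard references (Brouwer--Haemers, Godsil--Royle), so there is no internal proof to compare against. Your proposal is the standard self-contained proof for symmetric nonnegative matrices --- Rayleigh quotient plus the positivity of $(I+A)^{n-1}$ --- and it is essentially correct: the absolute-value trick, the deflation argument $w-t^{*}u$ for simplicity, and the pairing $w^{T}(\sigma y-Ay)=(\sigma-\rho)w^{T}y$ for part (3) are all sound.

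One step needs repair as written. In the simplicity argument you claim that equality in the triangle inequality ``forces all $u_j$ with $a_{ij}>0$ to share one sign'' and that ``connectivity then propagates a single sign.'' That propagation fails as stated: the condition that the neighbours of each vertex share a common sign is satisfied by the alternating sign pattern on a bipartite graph (all of one part positive, the other negative), and connected bipartite graphs exist, so this alone does not yield a global sign. The missing ingredient is the eigenvalue equation itself: since $\rho u_i=\sum_j a_{ij}u_j$ with $\rho>0$ and all summands of the common sign $\epsilon_i$, the coordinate $u_i$ also has sign $\epsilon_i$; thus \emph{adjacent} vertices share a sign, and only then does connectivity give a single sign. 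This is a half-line fix, and you do flag this step as the delicate one, but the argument as stated is incomplete there. Two trivial caveats: the lemma implicitly assumes $n\ge 2$ (for $K_1$ one has $\rho=0$, and your trace argument indeed uses the existence of an edge), and in part (3) the iteration $(I+A)^{n-1}y\le(1+\sigma)^{n-1}y$ needs $1+\sigma>0$, which is fine provided you establish $\sigma>\rho>0$ first, as your ordering does.
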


\begin{lem}[{\cite[Interlacing Theorem]{gs}}]\label{it}
Let $G$ be a graph with $n$ vertices and eigenvalues $\lambda_1\geq \lambda_2\geq \cdots \geq \lambda_n$ and let $H$ be an induced subgraph of $G$ with $m$ vertices and eigenvalues $\mu_1\geq \mu_2\geq \cdots \geq \mu_m$. Then for $1\leq i\leq m,$
\[\lambda_i\geq\mu_i\geq \lambda_{n-m+i}.\]
In other words, the eigenvalues of $H$ interlace the eigenvalues of $G$. 
\end{lem}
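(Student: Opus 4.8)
The plan is to deduce the statement from the Cauchy interlacing theorem for real symmetric matrices, exploiting the fact that $A(H)$ is a principal submatrix of $A(G)$. Since $H$ is an \emph{induced} subgraph, we may relabel the vertices so that $V(H)=\{v_1,\ldots,v_m\}$, and then $A(H)$ is exactly the top-left $m\times m$ block of $A(G)$: every adjacency inside $H$ is inherited verbatim from $G$ precisely because $H$ is induced. Thus it suffices to prove the two interlacing inequalities for an arbitrary principal submatrix of a real symmetric matrix.

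First I would recall the Courant--Fischer min-max characterization of the eigenvalues $\lambda_1\geq\cdots\geq\lambda_n$ of a symmetric $n\times n$ matrix $A$:
\[
\lambda_i=\max_{\dim U=i}\ \min_{0\neq x\in U}\frac{x^\top A x}{x^\top x}
=\min_{\dim V=n-i+1}\ \max_{0\neq x\in V}\frac{x^\top A x}{x^\top x},
\]
where $U,V$ range over subspaces of $\mathbb{R}^n$. The bridge between the two matrices is the isometric embedding $\iota:\mathbb{R}^m\hookrightarrow\mathbb{R}^n$ that pads a vector $y$ with zeros in the last $n-m$ coordinates. For every $y\in\mathbb{R}^m$ one has $\iota(y)^\top A(G)\,\iota(y)=y^\top A(H)\,y$ and $\|\iota(y)\|=\|y\|$, so the Rayleigh quotients of $A(H)$ coincide exactly with those of $A(G)$ restricted to the $m$-dimensional subspace $\iota(\mathbb{R}^m)$.

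For the upper bound $\mu_i\leq\lambda_i$ I would apply the max-min form: any $i$-dimensional subspace $U\subseteq\mathbb{R}^m$ maps under $\iota$ to an $i$-dimensional subspace $\iota(U)\subseteq\mathbb{R}^n$ with the same inner minimum of the Rayleigh quotient. Since these images form only a subcollection of all $i$-dimensional subspaces of $\mathbb{R}^n$, the maximum defining $\mu_i$ cannot exceed the maximum defining $\lambda_i$. For the lower bound $\mu_i\geq\lambda_{n-m+i}$ I would use the min-max form with $V$ of dimension $m-i+1$: since $n-(n-m+i)+1=m-i+1$, the quantity $\lambda_{n-m+i}$ is a minimum over \emph{all} $(m-i+1)$-dimensional subspaces of $\mathbb{R}^n$, whereas $\mu_i$ is the same minimum taken only over the images $\iota(U)$. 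Minimizing over the larger family yields a smaller value, so $\lambda_{n-m+i}\leq\mu_i$.

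There is no serious obstacle beyond bookkeeping; the one point demanding care is matching the dimension $m-i+1$ in the min-max formula to the index $n-m+i$ of $\lambda$, which is exactly where the shift in the interlacing inequality originates. An alternative route is an inductive single-vertex deletion argument, establishing $\lambda_{i+1}\leq\mu_i\leq\lambda_i$ in the case $m=n-1$ and then iterating $n-m$ times; this reduces the linear algebra to the one-step case at the cost of an induction, so I would prefer the direct Courant--Fischer argument above.
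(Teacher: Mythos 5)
Your proof is correct: since $H$ is an \emph{induced} subgraph, $A(H)$ is indeed a principal submatrix of $A(G)$, and your Courant--Fischer argument, including the key dimension count $n-(n-m+i)+1=m-i+1$ that produces the index shift, is the standard proof of Cauchy interlacing. The paper offers no proof of its own --- it quotes the lemma from the reference \cite{gs} --- and your argument is essentially the one given there, so there is nothing to reconcile.
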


\begin{lem}[{\cite[Theorem 1.3.10]{gs}}]\label{de}
Let $G$ be a connected graph. Then deleting an edge of $G$ strictly decreases its spectral radius.
\end{lem}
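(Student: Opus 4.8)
The plan is to compare the adjacency matrices of $G$ and $G-e$ directly and to exploit the simplicity and positivity of the Perron eigenvalue guaranteed by Lemma \ref{pfc}. Write $A = A(G)$ and let $e = uv$ be the edge to be deleted. Then $A(G-e) = A - E$, where $E$ is the symmetric $0/1$ matrix whose only nonzero entries are $E_{uv} = E_{vu} = 1$; thus $E$ is nonnegative, nonzero, and $x^{T} E x = 2\, x_u x_v$ for every vector $x$. The whole argument will be organized around this one-rank-two perturbation.

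First I would fix a nonnegative unit eigenvector $x$ of $A(G-e)$ for its largest eigenvalue, i.e. $(A-E)\,x = \rho(G-e)\,x$ with $x \ge 0$ and $\|x\| = 1$. Such an $x$ exists because $A(G-e)$ is a symmetric nonnegative matrix whose spectral radius equals its largest eigenvalue and which admits a nonnegative eigenvector for it (on each connected component of $G-e$ this is Lemma \ref{pfc}, and one keeps the component of largest spectral radius, extending its Perron vector by zeros). Using $A = (A-E) + E$ together with the Rayleigh characterization $\rho(G) = \lambda_{\max}(A) = \max_{\|y\|=1} y^{T} A y$, one obtains
\[
\rho(G) \;\ge\; x^{T} A x \;=\; x^{T}(A-E)x + x^{T} E x \;=\; \rho(G-e) + 2\,x_u x_v \;\ge\; \rho(G-e),
\]
since $x \ge 0$ forces $x_u x_v \ge 0$. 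This already yields the non-strict monotonicity $\rho(G-e)\le\rho(G)$.

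The heart of the argument is upgrading this to a strict inequality, and I expect this to be the main obstacle: $G-e$ may be disconnected, so Lemma \ref{pfc}(3), which is stated for connected graphs, cannot be applied to $G-e$ verbatim. The displayed chain is designed to absorb both cases at once. Suppose, for contradiction, that $\rho(G-e) = \rho(G)$. Then the endpoints of the chain coincide, so every inequality in it is an equality; the right-hand one forces $x_u x_v = 0$, and the left-hand one forces $x^{T} A x = \rho(G) = \lambda_{\max}(A)$. Since $\|x\| = 1$, the equality $x^{T} A x = \lambda_{\max}(A)$ means $x$ lies in the eigenspace of $A$ for $\rho(G)$, i.e. $A x = \rho(G)\,x$. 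But $G$ is connected, so by Lemma \ref{pfc} the eigenvalue $\rho(G)$ is simple with a strictly positive eigenvector; as $x \ge 0$ and $x \ne 0$, it must be a positive multiple of that Perron vector, whence $x > 0$ and $x_u x_v > 0$. This contradicts $x_u x_v = 0$, giving $\rho(G-e) < \rho(G)$.

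Finally, a remark on an alternative route that makes the connected case transparent: if $G-e$ happens to remain connected, one can skip the contradiction and instead feed the positive Perron vector $x>0$ of $G$ into Lemma \ref{pfc}(3) applied to $G-e$, since $A(G-e)\,x = \rho(G)\,x - E x \le \rho(G)\,x$ with $Ex \neq 0$ (because $x>0$ and $E\neq 0$), yielding $\rho(G-e) < \rho(G)$ immediately. The only genuinely delicate point is therefore the disconnected case, which the Rayleigh-quotient-plus-simplicity argument of the previous paragraph handles without any extra case analysis. I would present that uniform argument as the main proof and mention the shortcut only as an aside.
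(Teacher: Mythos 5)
Your proof is correct. Note that the paper does not prove this lemma at all: it is quoted verbatim from Cvetkovi\'{c}--Rowlinson--Simi\'{c} \cite[Theorem 1.3.10]{gs}, so there is no internal proof to compare against. The textbook route is the standard Perron--Frobenius monotonicity statement: if $0 \le B \le A$ entrywise with $B \ne A$ and $A$ irreducible, then $\rho(B) < \rho(A)$, applied with $A = A(G)$ (irreducible since $G$ is connected) and $B = A(G-e)$. Your argument re-derives exactly this special case from scratch via the Rayleigh quotient, and it does so correctly: the existence of a nonnegative top eigenvector of the possibly reducible $A(G-e)$ (extend the Perron vector of a component of maximal spectral radius by zeros) is handled properly, the chain $\rho(G) \ge x^{T}Ax = \rho(G-e) + 2x_u x_v \ge \rho(G-e)$ is valid, and the strictness step is sound --- equality in the Rayleigh bound for a unit vector does force $x$ into the top eigenspace of $A$, and simplicity of $\rho(G)$ plus $x \ge 0$, $x \ne 0$ then forces $x$ to be a positive multiple of the Perron vector, contradicting $x_u x_v = 0$. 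Your deliberate avoidance of applying Lemma \ref{pfc}(3) to the possibly disconnected $G-e$ is exactly the right caution; the uniform contradiction argument buys you freedom from any case analysis on connectivity of $G-e$, at the cost of being slightly longer than the one-line appeal to irreducible monotonicity that the cited reference uses.
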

By Lemma \ref{de}, the characterization of the graph having maximal spectral radius in $\G_{n,\alpha}$ is immediate. The \emph{join} of graphs $G$ and $H$, written as $G \vee H$, is the graph union of $G$ and $H$ together with all the edges joining each vertex of $G$ to each vertex of $H$. 
\begin{thm}[{\cite{sr}}]
Let $G\in \G_{n,\alpha}$. Then $\rho(G)\leq \rho(K_{n-\alpha}\vee \alpha K_1)$ with equality if and only if $G\cong K_{n-\alpha}\vee \alpha K_1$.
\end{thm}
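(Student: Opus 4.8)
The plan is to exploit Lemma \ref{de}, which says that deleting an edge strictly decreases the spectral radius, so that a graph maximizing $\rho$ within $\G_{n,\alpha}$ must be edge-maximal in the sense that no edge can be added while remaining in $\G_{n,\alpha}$. First I would note that $\G_{n,\alpha}$ is a finite set of graphs, so a maximizer $G$ exists. I then fix a maximum independent set $S\subseteq V(G)$ with $|S|=\alpha$ and write $T=V(G)\setminus S$, so that $|T|=n-\alpha$ and $T\neq\emptyset$ since $G$ is connected.

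The key observation is that adding any edge with at least one endpoint outside $S$ keeps the graph in $\G_{n,\alpha}$. Indeed, such an edge does not lie inside $S$, so $S$ remains independent after the addition, whence the independence number stays $\geq\alpha$; since adding an edge never increases the independence number, it stays exactly $\alpha$, and connectivity is clearly preserved. I would apply this twice. If two vertices $u,v\in T$ were nonadjacent, then $G+uv$ would lie in $\G_{n,\alpha}$ and, by Lemma \ref{de}, satisfy $\rho(G+uv)>\rho(G)$, contradicting maximality; hence $T$ induces the complete graph $K_{n-\alpha}$. Likewise, if $u\in T$ and $w\in S$ were nonadjacent, then $G+uw\in\G_{n,\alpha}$ with strictly larger spectral radius, again a contradiction; hence every vertex of $T$ is joined to every vertex of $S$.

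Combining these facts, and using that $S$ is independent and so carries no internal edges, I would conclude that $G$ has precisely the edge set of $K_{n-\alpha}\vee\alpha K_1$, that is, $G\cong K_{n-\alpha}\vee\alpha K_1$. Finally I would verify that this graph genuinely lies in $\G_{n,\alpha}$: it is connected, and any independent set meets the clique $K_{n-\alpha}$ in at most one vertex, which, being adjacent to all of $\alpha K_1$, cannot be enlarged; thus the maximum independent sets are exactly the $\alpha$ vertices of $\alpha K_1$ and the independence number equals $\alpha$. This shows that $K_{n-\alpha}\vee\alpha K_1$ is the unique maximizer, which is precisely the stated bound together with its equality characterization.

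There is no serious obstacle here, since the whole argument reduces to Lemma \ref{de}. The one point requiring care is the claim that adding an edge preserves membership in $\G_{n,\alpha}$: one must ensure that the independence number does not drop below $\alpha$, and this is exactly why the argument is phrased in terms of a fixed maximum independent set $S$ together with edges meeting its complement $T$, since such edges leave $S$ independent and hence keep the independence number pinned at $\alpha$.
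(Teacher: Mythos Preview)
Your argument is correct and is precisely the approach the paper has in mind: the paper states just before the theorem that ``By Lemma \ref{de}, the characterization of the graph having maximal spectral radius in $\G_{n,\alpha}$ is immediate,'' and your write-up is exactly the edge-maximality argument that makes this immediacy explicit. There is nothing to add.
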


We introduce operations on graphs which decreases the spectral radius. An \emph{internal path} of a graph is a sequence of vertices $u_1,u_2,\cdots,u_k$ such that all $u_i$ are distinct (except possibly $u_1=u_k$), the degree $d(u_i)$ satisfy 
    \[d(u_1)\geq3,\ d(u_2)=\cdots=d(u_{k-1})=2, \ d(u_k)\geq3,\] 
and $u_i$ is adjacent to $u_{i+1}$ for $i=1,2,\cdots,k-1.$ Note that two adjacent vertices with degree at least 3 form an internal path. 

\begin{lem}[{\cite[Proposition 2.4]{ie}}] \label{ies}
Let $G$ be a graph not isomorphic to the graph $\Tilde{D}_n$ depicted in Figure \ref{Fig:Hn}. Then the spectral radius strictly decreases after inserting a vertex of degree 2 to an internal path of $G$ (i.e. after deleting an edge $uv$ in an internal path and adding a new vertex $w$ and two new edges $uw$ and $wv$). 
\end{lem}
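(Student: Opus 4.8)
The plan is to recast the operation as lengthening an internal path by one vertex and then to control the largest root of the characteristic polynomial as the path grows. First I would note that subdividing \emph{any} edge of the internal path produces the same graph up to isomorphism: writing the path as $a=p_0,p_1,\dots,p_s=b$ between the two branch vertices $a,b$ (all $p_1,\dots,p_{s-1}$ of degree $2$), inserting a degree-$2$ vertex on any of its edges merely replaces the path by one that is one vertex longer, leaving the two ends of $G$ attached at $a$ and $b$ unchanged. Let $A$ and $B$ be these ends, meeting the path at $a$ and $b$, and let $G_m$ be the graph in which $A$ and $B$ are joined by a path with exactly $m$ internal degree-$2$ vertices. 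Then $G=G_m$, the graph after insertion is $G_{m+1}$, and it suffices to prove $\rho(G_{m+1})<\rho(G_m)$. (A closed internal path $u_1=u_k$ is handled the same way, with a single end and a cycle through $a$, and produces no exception.)

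Next I would write down the characteristic polynomials. With $\alpha_0=\phi(A,\lambda)$, $\alpha_1=\phi(A-a,\lambda)$, $\beta_0=\phi(B,\lambda)$, $\beta_1=\phi(B-b,\lambda)$ and $T_k=\phi(P_k,\lambda)$ (the path on $k$ vertices, so $T_k=\lambda T_{k-1}-T_{k-2}$), expanding along the path gives
\[
\phi(G_m,\lambda)=\alpha_0\beta_0\,T_m-(\alpha_1\beta_0+\alpha_0\beta_1)\,T_{m-1}+\alpha_1\beta_1\,T_{m-2}.
\]
Putting $\lambda=t+t^{-1}$ with $t\ge 1$ and $T_k=(t^{k+1}-t^{-k-1})/(t-t^{-1})$, the quadratic collecting the growing part factors as $g_A(t)g_B(t)$ with $g_A(t)=\alpha_0 t-\alpha_1$, $g_B(t)=\beta_0 t-\beta_1$, so that
\[
(t-t^{-1})\,\phi(G_m,\lambda)=g_A(t)g_B(t)\,t^{m-1}-g_A(t^{-1})g_B(t^{-1})\,t^{-(m-1)}.
\]
Writing $\rho=\rho(G_m)$ and using that $\rho$ is the largest (real) root, so $\phi(G_m,\lambda)\ge 0$ for $\lambda\ge\rho$, this identity bounds the subtracted term in the analogous identity for $G_{m+1}$ and yields
\[
(t-t^{-1})\,\phi(G_{m+1},\lambda)\ \ge\ g_A(t)g_B(t)\,t^{m-2}(t^2-1)\qquad(\lambda\ge\rho).
\]
Since $t>1$ for $\lambda\ge\rho>2$, once I know $g_A(t)g_B(t)>0$ on $[\rho,\infty)$ the right-hand side is strictly positive, so $\phi(G_{m+1})$ has no root $\ge\rho$ and hence $\rho(G_{m+1})<\rho(G_m)$.

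The whole proof therefore reduces to the positivity $g_A(t),g_B(t)>0$ at $\lambda=\rho$, and this is the main obstacle; it is exactly where the hypothesis $G\not\cong\tilde D_n$ and the threshold $2$ enter. I would first secure $\rho>2$: since $G_m$ has an internal path it has two vertices of degree at least $3$, and by Smith's classification of connected graphs of spectral radius at most $2$ (the cycles and the (extended) Dynkin diagrams) the only such graph with an internal path is $\tilde D_n$; as $G\not\cong\tilde D_n$ we get $\rho>2$, and also $\rho>\rho(A),\rho(B)$ by Lemma~\ref{de} since $A,B$ are proper connected subgraphs. In particular $\alpha_0,\beta_0>0$ at $\lambda=\rho$, so that $g_A(t)>0\iff r_A<t$ and $g_B(t)>0\iff r_B<t$, where $r_A=\alpha_1/\alpha_0$ and $r_B=\beta_1/\beta_0$.

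To prove $r_A,r_B<t$ I would identify $r_A$ with a Perron-vector ratio: solving the eigenvalue equations of the Perron vector $\mathbf x$ inside $A$ gives $x_a=r_A\,x_{p_1}$, so $r_A=x_a/x_{p_1}$ is the junction ratio along the path (and likewise $r_B=x_b/x_{p_m}$). The positive ratios $\nu_i=x_{p_{i+1}}/x_{p_i}$ satisfy $\nu_i=\rho-\nu_{i-1}^{-1}$, a monotone map whose fixed points are exactly $t$ and $t^{-1}$ and which sends $(t^{-1},t)$ onto itself. The geometric input is that each end is \emph{at least as binding as a half-infinite ray}: because $a$ and $b$ have at least two neighbours inside $A$ and $B$, a walk-generating-function comparison against the fork (the minimal such end) gives $r_A,r_B>t^{-1}$, i.e. $\nu_0<t$ and $\nu_m>t^{-1}$; pushing these two boundary inequalities through the monotone dynamics of $\nu\mapsto\rho-\nu^{-1}$ forces $\nu_0>t^{-1}$ and $\nu_m<t$, that is $r_A<t$ and $r_B<t$. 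This closes the argument uniformly in $m$ (including the delicate single-edge case $m=0$), and the degeneration $t\to 1$ as $\rho\to 2$ is precisely what allows equality for the excluded fork-ended graph $\tilde D_n$. The step I expect to be most delicate is this binding comparison together with the propagation of the ratio bounds, since it is where the fork/$\tilde D_n$ borderline is genuinely met.
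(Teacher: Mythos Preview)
The paper does not prove Lemma~\ref{ies}; it is quoted from Hoffman and Smith~\cite{ie} without proof, so there is no argument in the paper to compare against. Your outline is in fact a faithful reconstruction of the original Hoffman--Smith method: the reduction to the one-parameter family $G_m$, the expansion of $\phi(G_m,\lambda)$ in path polynomials, the substitution $\lambda=t+t^{-1}$ with the resulting factorisation $(t-t^{-1})\phi(G_m,\lambda)=g_A(t)g_B(t)t^{m-1}-g_A(t^{-1})g_B(t^{-1})t^{-(m-1)}$, and the identification of the crux as the positivity $g_A(t),g_B(t)>0$ at $\lambda=\rho$ are all correct as written.

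The only step left genuinely sketchy is the ``binding comparison'' giving $r_A,r_B>t^{-1}$, and this can be done more directly than via walk generating functions. With $x$ the Perron vector of $G_m$ and $S=\sum_{v\in N_A(a)}x_v$, the eigen\-equation at $a$ reads $x_{p_1}=\rho x_a-S$, so $r_A>t^{-1}$ is equivalent to $S>t^{-1}x_a$. Each $v\in N_A(a)$ satisfies $\rho x_v\ge x_a$ (since $a\in N(v)$ and $x>0$), hence $x_v\ge x_a/\rho$; as $|N_A(a)|\ge 2$ this gives $S\ge 2x_a/\rho$, and $2/\rho>t^{-1}$ is just $t^2>1$, which holds because $\rho>2$. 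The same argument gives $r_B>t^{-1}$, and your monotone-dynamics propagation (the map $\nu\mapsto\rho-\nu^{-1}$ is increasing with fixed points $t^{-1}<t$, so $\nu_m>t^{-1}$ forces $\nu_0>t^{-1}$ and $\nu_0<t$ forces $\nu_m<t$) then yields $r_A<t$ and $r_B<t$. One further small point worth making explicit: you need $g_A(t)g_B(t)>0$ for \emph{all} $\lambda\ge\rho$, not only at $\lambda=\rho$; this follows because $r_A(\lambda)=\phi(A-a,\lambda)/\phi(A,\lambda)$ is strictly decreasing on $(\rho(A),\infty)$ while $t(\lambda)$ is increasing, so $t(\rho)>r_A(\rho)$ persists for larger $\lambda$.
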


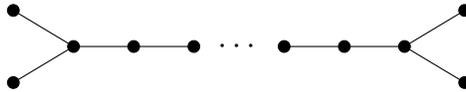
\begin{figure}[h!]
    \centering
  \begin{tikzpicture}
      [scale=.4,auto=left,every node/.style={scale=1}]
      \tikzset{Bullet/.style={circle,draw,fill=black,scale=0.4}}
      \node[Bullet] (uu) at (-5.5,0) {};
      \node[Bullet] (vv) at (5.5,0)  {};
      \node[Bullet] (u1) at (-7.5,1.2)  {};
      \node[Bullet] (u2) at (-7.5,-1.2)  {};
      \node[Bullet] (uu2) at (-3.5,0)  {};
      \node[Bullet] (vv2) at (3.5,0) {};
      \node[Bullet] (uu3) at (-1.5,0)  {};
      \node[Bullet] (vv3) at (1.5,0) {};
      \node[Bullet] (v1) at (7.5,1.2)  {};
      \node[Bullet] (vr) at (7.5,-1.2)  {};
      \node(dots) at (0,0){$\cdots$};

      \foreach \from/\to in {u1/uu, u2/uu, v1/vv,vr/vv,uu/uu2,uu2/uu3,vv/vv2,vv2/vv3}
        \draw[black] (\from) -- (\to);
    \end{tikzpicture}
    \caption{$\Tilde{D}_n$}\label{Fig:Hn}
\end{figure}

Thus, by Lemmas \ref{it} and \ref{ies}, removing a vertex outside of an internal path and reinserting it into an internal path strictly decreases the spectral radius while the number of vertices remains unchanged. We will repeatedly use this operation. 

For the Perron vector $x$ of a graph $G$, we denote by $x_u$ the component of $x$ corresponding to a vertex $u\in V(G)$. A \emph{cut edge} is a single edge whose removal disconnects the graph. 

\begin{lem}[{\cite{pendant}}] \label{dep}
Let $G$ be a connected graph with the Perron vector $x$. Let $u,v$ be two vertices of $G$. Suppose that $v_1,v_2,\ldots,$ $v_s(1\leq s \leq d_v)$ are some vertices of $N_G(v)\setminus N_G(u)$.
Let $G^*$ be the graph obtained from $G$ by deleting the edges $(v,v_i)$ and adding the edges $(u,v_i)(1\leq i\leq s)$. If $x_u\geq x_v$, then $\rho(G)<\rho(G^*)$.
\end{lem}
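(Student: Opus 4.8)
The plan is to use the Rayleigh-quotient (Courant--Fischer) characterization of the spectral radius together with the positivity of the Perron vector guaranteed by Lemma~\ref{pfc}. Write $A = A(G)$ and $A^* = A(G^*)$, and let $x$ be the Perron vector of $G$, normalized so that $x^{\top} x = 1$. Since $x$ is a unit eigenvector of $A$ for the eigenvalue $\rho(G)$, we have $\rho(G) = x^{\top} A x$, while for the real symmetric matrix $A^*$ the variational principle gives $\rho(G^*) = \lambda_{\max}(A^*) \ge x^{\top} A^* x$, with equality precisely when the unit vector $x$ lies in the top eigenspace of $A^*$. Thus it suffices to compare the two quadratic forms evaluated at the single vector $x$.

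Next I would compute the difference of the two forms directly from the edge sets. Using $y^{\top} M y = 2\sum_{\{a,b\}\in E} y_a y_b$ for the adjacency matrix $M$ of a loopless graph with edge set $E$, and noting that $G^*$ arises from $G$ by deleting exactly the edges $\{v,v_i\}$ and inserting exactly the edges $\{u,v_i\}$ for $1 \le i \le s$ (these are genuine, non-overlapping changes because $v_i \in N_G(v)$ while $v_i \notin N_G(u)$, and $v_i \ne u$), I obtain
\[
x^{\top} A^* x - x^{\top} A x = 2\sum_{i=1}^{s} x_{v_i}\,(x_u - x_v).
\]
Since the Perron vector is strictly positive, each $x_{v_i} > 0$, and by hypothesis $x_u \ge x_v$, so this difference is nonnegative. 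Combining with the previous paragraph yields the chain $\rho(G^*) \ge x^{\top} A^* x \ge x^{\top} A x = \rho(G)$.

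The remaining, and main, point is to upgrade this to a strict inequality, since when $x_u = x_v$ the two quadratic forms agree and the chain could a priori be tight. I would argue by contradiction: if $\rho(G^*) = \rho(G)$, then equality holds throughout, so in particular $x^{\top} A^* x = \lambda_{\max}(A^*)$, which forces the positive unit vector $x$ to be a top eigenvector of $A^*$, i.e. $A^* x = \rho(G^*) x = \rho(G) x = A x$. Reading off the coordinate indexed by $u$ and using that the neighborhood of $u$ in $G^*$ is exactly $N_G(u) \cup \{v_1,\dots,v_s\}$ gives $(A^* x)_u = (A x)_u + \sum_{i=1}^s x_{v_i}$, whence $\sum_{i=1}^s x_{v_i} = 0$; this contradicts $x_{v_i} > 0$ and $s \ge 1$. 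Therefore $\rho(G^*) > \rho(G)$.

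I expect the only delicate step to be this last one: the bound $x^{\top} A^* x \ge x^{\top} A x$ is elementary, but extracting strictness requires the observation that equality in the variational inequality forces $x$ to be an $A^*$-eigenvector, after which the coordinate-$u$ equation produces the contradiction. One should also verify at the outset that the edge modifications are well defined---the $v_i$ are genuine neighbors of $v$, are non-neighbors of $u$, and are distinct from $u$---so that no edge is double-counted and no self-loop is created; these are precisely the hypotheses guaranteeing the clean formula for the difference of the quadratic forms.
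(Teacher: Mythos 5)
Your proof is correct. The paper gives no proof of this lemma---it is quoted directly from \cite{pendant}---and your argument (the Rayleigh-quotient chain $\rho(G^*)\ge x^{\top}A^{*}x\ge x^{\top}Ax=\rho(G)$, followed by the equality-case analysis that forces the positive vector $x$ into the top eigenspace of $A^{*}$ and then derives $\sum_{i=1}^{s}x_{v_i}=0$ from the coordinate at $u$) is essentially the standard proof given in that reference, including your correct observation that the implicit convention $v_i\neq u$ is needed so that no self-loop is created.
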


\begin{lem}[{\cite {msr}}] \label{b30}
Let $G$ be a connected graph with the Perron vector $x$. Suppose that $vw_1$ is a cut edge of $G$, $N_G(v)=\{w_1,w_2,\ldots,w_t\}(t\geq 3)$ and $x_{w_1}=\min_{w\in N_G(v)}{x_w}$. Let $G'$ be a graph obtained from $G$ by replacing $v$ with two new vertices $v',v''$ and adding new edges $v'w_1,v'w_2,\ldots,v'w_s$ and $v''w_{s+1},v''w_{s+2},\ldots,v''w_t$ for some $2\le s\le t-1$. Then $\rho(G')\leq\rho(G)$, with equality if and only if $t=3$ and $x_{w_1}=x_{w_2}=x_{w_3}$.
\end{lem}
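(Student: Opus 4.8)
The natural plan is to bound $\rho(G')$ from above by $\rho(G)$ using the Perron vector of $G$ together with the Collatz--Wielandt inequality of Lemma~\ref{pfc}(3). Let $x$ be the Perron vector of $G$ and set $\rho=\rho(G)$. I would build a test vector $y$ on $V(G')$ by retaining the old weights, $y_u=x_u$ for $u\in V(G)\setminus\{v\}$, and by duplicating the weight of $v$, namely $y_{v'}=y_{v''}=x_v$. The key point of this choice is that every neighbour $w_i$ receives from the split vertices exactly the weight $x_v$ it received from $v$ in $G$, so the eigen-equation is preserved verbatim at every vertex of $V(G)\setminus\{v\}$, giving $(A(G')y)_p=\rho y_p$ there. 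Only $v'$ and $v''$ behave differently.

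At the two split vertices one computes $(A(G')y)_{v'}=\sum_{i=1}^{s}x_{w_i}$ and $(A(G')y)_{v''}=\sum_{i=s+1}^{t}x_{w_i}$, whereas $\rho y_{v'}=\rho y_{v''}=\rho x_v=\sum_{i=1}^{t}x_{w_i}$. Since $2\le s\le t-1$, each of these partial sums runs over a proper nonempty subset of the positive numbers $x_{w_1},\dots,x_{w_t}$, so $A(G')y\le \rho y$ with a genuine drop at $v'$ and $v''$. Feeding this into Lemma~\ref{pfc}(3) (applied to the component of $G'$ that realises its largest eigenvalue) yields $\rho(G')\le\rho(G)$. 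Note that this settles the inequality without ever invoking the cut-edge hypothesis or the minimality of $x_{w_1}$.

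That is exactly why I expect the equality characterisation to be the main obstacle: those two unused hypotheses must be precisely what pins down the boundary case. Because the fixed test vector above already produces strictness from the discarded weights $\sum_{i=s+1}^{t}x_{w_i}$ and $\sum_{i=1}^{s}x_{w_i}$, capturing equality requires working with the Perron vector $x'$ of $G'$ itself. The plan is to assume $\rho(G')=\rho(G)$, transport $x'$ back to $G$ by merging $v',v''$ into a single vertex of weight $x'_{v'}+x'_{v''}$, and read off from a completing-the-square identity that the defect $\rho(G)-\rho(G')$ is controlled by a term vanishing only in a degenerate, maximally balanced configuration. I would then use the cut-edge condition, which makes the $w_1$-branch hang off $v$ through the single edge $vw_1$ and so ties $x_{w_1}$ rigidly to $x_v$, together with $x_{w_1}=\min_{w\in N_G(v)}x_w$, to eliminate every split except $t=3$ with $s=2$ and to force $x_{w_1}=x_{w_2}=x_{w_3}$. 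Making this degeneration argument both necessary and sufficient, and reconciling it with the seemingly strict inequality above, is the delicate part; the inequality itself is routine.
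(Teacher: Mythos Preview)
Your test-vector argument is clean and, for the operation \emph{as literally written}, it is correct --- and therein lies the problem. With $y_{v'}=y_{v''}=x_v$ you get $A(G')y\le\rho\,y$ with a genuine drop at both $v'$ and $v''$; since each component of $G-v$ contains some $w_i$, every component of $G'$ contains $v'$ or $v''$, and Lemma~\ref{pfc}(3) then forces $\rho(G')<\rho(G)$ \emph{strictly, in all cases}. This flatly contradicts the equality clause, and you are right to sense the tension (``reconciling it with the seemingly strict inequality above is the delicate part''). The difficulty is not in your reasoning but in the transcription of the lemma.

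Compare the statement with Figure~\ref{bcompare}(b) and with its use in Lemma~\ref{b=}: there $G=B(m,p,q)$ is bicyclic on $n=m+p+q-1$ vertices and $n+1$ edges, while $G'=B(m,p-1,q+2)$ is bicyclic on $n+1$ vertices and $n+2$ edges. Under the operation as written ($v'$ adjacent to $w_1,\dots,w_s$ and $v''$ adjacent only to $w_{s+1},\dots,w_t$) the edge count is preserved, so $G'$ would merely be unicyclic --- which it is not. In the figure both $v'$ and $v''$ are drawn adjacent to $w_1$; the intended operation from \cite{msr} is that $v''$ is adjacent to $w_1,w_{s+1},\dots,w_t$. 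With that extra edge your test vector collapses at $w_1$, which now receives $y_{v'}+y_{v''}=2x_v$ instead of $x_v$, so that $(A(G')y)_{w_1}=\rho\,x_{w_1}+x_v>\rho\,y_{w_1}$, and the Collatz--Wielandt bound no longer applies.

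This is exactly where the two hypotheses you flagged as unused enter: the surplus sits on the $w_1$-side, which by the cut-edge assumption is an entire branch hanging off $v$, and the condition $x_{w_1}=\min_i x_{w_i}$ is what lets one rebalance (or, going the other way, transport the Perron vector of $G'$ back to $G$) so that the surplus at $w_1$ is dominated by the savings at $w_2,\dots,w_t$. The paper itself gives no proof here --- it only quotes \cite{msr} --- so there is nothing to compare your argument against line by line; but before you attempt the equality analysis you should first correct the statement of the operation.
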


\section{Spectral radius of bicyclic graphs}
A connected graph $G$ is called a \emph{unicyclic graph} if $|E(G)|=|V(G)|$ and a \emph{bicyclic graph} if $|E(G)|=|V(G)|+1$. For even $n$, we will see that the minimizer graphs in $\displaystyle \G_{n,\lceil\frac{n}{2}\rceil-1}$ are bicyclic graphs. In this section, we present results on the spectral radius of bicyclic graphs, which will be used in the proof of our main theorem. 

\begin{df} \label{def:bicycle}
Let $m$, $p$, $q$ be positive integers.
\begin{enumerate}
    \item Let $P(m,p,q)$ be the graph obtained by identifying each end vertex of the path graphs $P_m$, $P_p$ and $P_q$. We assume at most one of $m, p, q$ is one. 
    \item Let $C(m,q)$ be the graph obtained by identifying a vertex of the cycle graphs $C_m$ and $C_q$. We assume $m,q\ge 3$. 
    \item Let $B(m,p,q)$ be the graph obtained by attaching $C_m$ and $C_q$ at each end vertex of the path $P_p$. We assume $m,q\ge 3$ and $p\ge 1$.
\end{enumerate}
\end{df}
These graphs are depicted in Figure \ref{Fig:PCB}. Each of $m,p,q$ is the number of edges for an independent path or cycle. The number of vertices of $P(m,p,q)$ and $B(m,p,q)$ is $m+p+q-1$ and that of $C(m,q)$ is $m+q-1$. We name each vertex of $P(m,p,q)$ and $B(m,p,q)$ as shown in Figure \ref{Fig:PCB}. 
\begin{figure}[h!]
    \centering
  \begin{tikzpicture}
      [scale=.37,auto=left,every node/.style={scale=0.95}]
      \tikzset{Bullet/.style={circle,draw,fill=black,scale=0.35}}
      \node[Bullet,label=left:{\footnotesize$u_0$}] (uu) at (-3.2,0) {};
      \node[Bullet,label=right:{\footnotesize$v_0$}] (vv) at (3.2,0)  {};
      \node[Bullet,label=left:{\footnotesize$u_1$}] (u1) at (-2.5,1.5)  {};
      \node[Bullet,label=above:{\footnotesize$u_2~~~~$}] (x2) at (-1,2.5)  {};
      \node[Bullet,label=below:{\footnotesize$v_2~~~~$}] (xn-2) at (-1,-2.5)  {};
      \node[Bullet,label=left:{\footnotesize$v_1$}] (u2) at (-2.5,-1.5)  {};
      \node[Bullet,label=above:{\footnotesize$w_1$}] (uu2) at (-1.7,0)  {};
      \node[Bullet,label=above:{\footnotesize$w_{p-1}$}] (vv2) at (1.7,0) {};
      \node[Bullet,label=right:{\footnotesize$u_{m-1}$}] (v1) at (2.5,1.5)  {};
      \node[Bullet,label=above:{\footnotesize$~~~~u_{m-2}$}] (y2) at (1,2.5)  {};
      \node[Bullet,label=below:{\footnotesize$~~~~v_{q-2}$}] (yn-2) at (1,-2.5)  {};
      \node[Bullet,label=right:{\footnotesize$v_{q-1}$}] (vr) at (2.5,-1.5)  {};
      \node(dots) at (0.1,2.6){$\cdots$};
      \node(dots) at (0.1,-2.6){$\cdots$};
      \node(dots) at (0,0){$\cdots$};

      \node at (0,-5) {$P(m,p,q)$};
      \foreach \from/\to in {u1/uu, u2/uu, v1/vv,vr/vv,uu/uu2,vv/vv2,u1/x2,u2/xn-2,v1/y2,vr/yn-2}
        \draw[black] (\from) -- (\to);
    \end{tikzpicture}
\hfil
    \begin{tikzpicture}
      [scale=.37,auto=left,every node/.style={scale=0.95}]
      \tikzset{Bullet/.style={circle,draw,fill=black,scale=0.35}}
      \node[Bullet] (uu) at (0,0) {};
      \node[Bullet] (vv) at (0,0) {};
      \node[Bullet] (u1) at (-1,1.5)  {};
      \node[Bullet,label=above:{\footnotesize$m$}] (x2) at (-2.5,2)  {};
      \node[Bullet] (x3) at (-4,1.5)  {};
      \node[Bullet] (xn-3) at (-4,-1.5)  {};
      \node[Bullet] (xn-2) at (-2.5,-2)  {};
      \node[Bullet] (u2) at (-1,-1.5)  {};
      \node[Bullet] (v1) at (1,1.5)  {};
      \node[Bullet,label=above:{\footnotesize$q$}] (y2) at (2.5,2)  {};
      \node[Bullet] (y3) at (4,1.5)  {};
      \node[Bullet] (yn-3) at (4,-1.5)  {};
      \node[Bullet] (yn-2) at (2.5,-2)  {};
      \node[Bullet] (vr) at (1,-1.5)  {};
     
      \node(dots) at (-4.5,0.2){\vdots};
      \node(dots) at (4.5,0.2){\vdots};
      \node at (0,-4.5) {$C(m,q)$};
      \foreach \from/\to in {u1/uu, u2/uu, v1/vv,vr/vv,u1/x2,u2/xn-2,v1/y2,vr/yn-2,x2/x3,xn-3/xn-2,y2/y3,yn-3/yn-2}
        \draw[black] (\from) -- (\to);
    \end{tikzpicture}
%
\hfil
    \begin{tikzpicture}
      [scale=.37,auto=left,every node/.style={scale=0.95}]
      \tikzset{Bullet/.style={circle,draw,fill=black,scale=0.35}}
      \node[Bullet,label=left:{\footnotesize$u_0$}] (uu) at (-3.5,0) {};
      \node[Bullet,label=right:{\footnotesize$v_0$}] (vv) at (3.5,0)  {};
      \node[Bullet,label=above:{\footnotesize$u_1$}] (u1) at (-4.5,1.5)  {};
      \node[Bullet,label=above:{\footnotesize$u_2$}] (x2) at (-6,2)  {};
      \node[Bullet] (x3) at (-7.5,1.5)  {};
      \node[Bullet] (xn-3) at (-7.5,-1.5)  {};
      \node[Bullet,label=below:{\footnotesize$u_{m-2}$}] (xn-2) at (-6,-2)  {};
      \node[Bullet,label=right:{\footnotesize$u_{m-1}$}] (u2) at (-4.5,-1.5)  {};
      \node[Bullet,label=above:{\footnotesize$w_1$}] (uu2) at (-1.5,0)  {};
      \node[Bullet,label=above:{\footnotesize$w_{p-1}$}] (vv2) at (1.5,0) {};
      \node[Bullet,label=above:{\footnotesize$v_1$}] (v1) at (4.5,1.5)  {};
      \node[Bullet,label=above:{\footnotesize$v_2$}] (y2) at (6,2)  {};
      \node[Bullet] (y3) at (7.5,1.5)  {};
      \node[Bullet] (yn-3) at (7.5,-1.5)  {};
      \node[Bullet,label=below:{\footnotesize$v_{q-2}$}] (yn-2) at (6,-2)  {};
      \node[Bullet,label=left:{\footnotesize$v_{q-1}$}] (vr) at (4.5,-1.5)  {};

      \node(dots) at (8,0.2){\vdots};
      \node(dots) at (-8,0.2){\vdots};
      \node(dots) at (0,0){$\cdots$};
      \node at (0,-4.5) {$B(m,p,q)$};
      \foreach \from/\to in {u1/uu, u2/uu, v1/vv,vr/vv,uu/uu2,vv/vv2,u1/x2,u2/xn-2,v1/y2,vr/yn-2,x2/x3,xn-3/xn-2,y2/y3,yn-3/yn-2}
        \draw[black] (\from) -- (\to);
    \end{tikzpicture}
    \caption{The bicyclic graphs $P(m,p,q)$, $C(m,q)$ and $B(m,p,q)$}\label{Fig:PCB}
\end{figure}
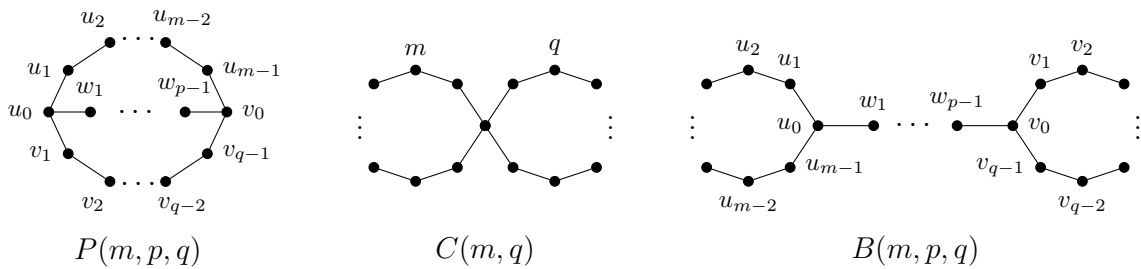

Note that a graph $G$ with $|E(G)|\ge |V(G)|+1$ contains a subgraph isomorphic to either $P(m,p,q)$, $C(m, q)$ or $B(m,p,q)$ for some integer $m, p, q$. We review results of \cite{bc} and \cite{hg} on the bicyclic graphs in Definition \ref{def:bicycle}. 

\begin{lem}[{\cite{hg}}]\label{pb}
For any integers $m\geq3$ and $p\geq 1$, $\rho(P(m,p,m))=\rho(B(m,p,m))$.
\end{lem}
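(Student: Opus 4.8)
The plan is to exploit the reflection symmetries common to both graphs and reduce each to a \emph{common} weighted quotient. Both $P(m,p,m)$ and $B(m,p,m)$ carry an obvious group of automorphisms isomorphic to $\mathbb{Z}_2\times\mathbb{Z}_2$: for $P(m,p,m)$ there is the ``top--bottom'' swap $\sigma:u_i\leftrightarrow v_i$ together with the ``left--right'' swap $u_0\leftrightarrow v_0,\ u_i\leftrightarrow u_{m-i},\ w_j\leftrightarrow w_{p-j}$; for $B(m,p,m)$ there is the cycle-swap $\tau$ (interchanging the two $C_m$'s and reversing the middle path) together with the reflection of each cycle fixing its attachment vertex. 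By Lemma~\ref{pfc} the Perron vector is the unique positive unit eigenvector, so applying any automorphism sends it to itself; hence the Perron vector is constant on the orbits of these subgroups. Consequently the orbit partition is equitable and its quotient matrix has $\rho(G)$ among its eigenvalues. I would show that the two quotient matrices, one for $P(m,p,m)$ and one for $B(m,p,m)$, are \emph{literally the same matrix}.

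\textbf{Describing the common quotient.} The orbit classes are organized ``by depth.'' Along the cyclic part one takes $C_0=\{u_0,v_0\}$ and, for $1\le i\le\lfloor m/2\rfloor$, the class of all vertices at depth $i$ from $u_0$ (respectively $v_0$); along the middle one takes the classes $\{w_j,w_{p-j}\}$. For the interior depths $1\le i<\lfloor m/2\rfloor$ a representative has exactly one neighbour at depth $i-1$ and one at depth $i+1$ in \emph{both} graphs, so the interior of the quotient is the same tridiagonal recurrence. The middle path is identical in the two graphs (same vertices, same boundary identification $x_{u_0}=x_{v_0}$), so it contributes identical quotient entries. The two places that require care are: (i) the class $C_0$, where $u_0$ sees precisely two depth-$1$ cyclic neighbours ($u_1,v_1$ in $P$; $u_1,u_{m-1}$ in $B$) plus the single middle neighbour $w_1$ --- giving entries $2$ and $1$ in both; and (ii) the fold point. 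For the fold I would split on parity of $m$: if $m=2t$ the deepest class is a single orbit whose representative has both neighbours at depth $t-1$ (quotient entry $2$), while if $m=2t+1$ the two deepest vertices lie in one class, producing a quotient self-loop of weight $1$ together with a single edge to depth $t-1$. In either parity the folded cycle $C_m$ (reflected at $u_0$) and the folded open path of length $m$ (reflected at its centre) yield exactly the same boundary data. Assembling these, the quotient matrices coincide; this parity bookkeeping at the fold is the step I expect to be the main obstacle, since it is where the genuinely different local structures (a cycle versus an open path) must be checked to collapse to the same thing.

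\textbf{Conclusion.} Let $B$ denote this common quotient matrix. Since both graphs are connected, the associated quotient digraph is strongly connected, so $B$ is a nonnegative irreducible matrix; let $r$ be its Perron root and $z>0$ a corresponding eigenvector. Lifting $z$ to each graph by declaring it constant on the orbit classes produces, by the defining relation of an equitable partition, a genuine eigenvector of the adjacency matrix with eigenvalue $r$, and this lifted vector is strictly positive. By Lemma~\ref{pfc}(3) (or the uniqueness in (2)) a positive eigenvector can only belong to the spectral radius, so $r=\rho(P(m,p,m))$ and likewise $r=\rho(B(m,p,m))$. Therefore $\rho(P(m,p,m))=\rho(B(m,p,m))$, as claimed. (Equivalently, one may skip the matrix language and directly build a single ``depth profile'' solving the common folded recurrence with the common boundary conditions, then observe it is simultaneously a positive eigenvector of both graphs for the same eigenvalue; the verification is identical.)
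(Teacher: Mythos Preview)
Your proposal is correct and is essentially the same as the paper's own argument: the paper remarks that the partition
\[
\{u_0,v_0\},\ \{u_1,v_1,u_{m-1},v_{m-1}\},\ \ldots,\ \{u_{\lfloor m/2\rfloor},v_{\lfloor m/2\rfloor},u_{\lceil m/2\rceil},v_{\lceil m/2\rceil}\},\ \{w_{1},w_{p-1}\},\ \ldots,\ \{w_{\lfloor p/2\rfloor},w_{\lceil p/2\rceil}\}
\]
is equitable on both $P(m,p,m)$ and $B(m,p,m)$ with the same quotient matrix, hence the spectral radii agree. Your write-up fleshes out exactly this (the $\mathbb{Z}_2\times\mathbb{Z}_2$ orbit partition, the parity check at the fold, and the Perron lift), so there is nothing materially new or different to compare.
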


In {\cite{hg}}, Simi\'{c} and Koci\'{c} proved Lemma \ref{pb} by observing that the Perron vectors and the spectral radii of the two graphs should satisfy the same equations. We remark that Lemma \ref{pb} can also be seen by using equitable partition. Namely, the vertex sets
\[ \{u_0,v_0\}, \{u_1,v_1,u_{m-1},v_{m-1}\},\ldots,\{u_{\lfloor\frac{m}{2}\rfloor},v_{\lfloor\frac{m}{2}\rfloor},u_{\lceil\frac{m}{2}\rceil},v_{\lceil\frac{m}{2}\rceil}\}, \{w_{1},w_{p-1}\}, \ldots \{w_{\lfloor\frac{p}{2}\rfloor},w_{\lceil\frac{p}{2}\rceil} \}
\]
on both graphs form equitable partitions having the same quotient matrix. Hence, they have the same spectral radius.

\begin{lem}[{\cite{hg}}]\label{p&c}
\begin{enumerate}
    \item If $m+p+q$ is fixed, $\rho(P(m,p,q))$ decreases as $\max(m,p,q)-\min(m,p,q)$ decreases.  
    \item If $m+q$ is fixed, $\rho(C(m,q))$ decreases as $\max(m,q)-\min(m,q)$ decreases.
\end{enumerate}
\end{lem}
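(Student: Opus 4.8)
The plan is to parametrize the spectral radius by writing $\rho = 2\cosh t$ with $t>0$ and to reduce both statements to a single scalar ``secular'' equation whose root $t$ moves monotonically under a balancing operation. First I would record that both $P(m,p,q)$ and $C(m,q)$ are bicyclic, so each has one more edge than vertices; evaluating the Rayleigh quotient at the all-ones vector gives $\rho \ge 2|E|/|V| > 2$, so the parameter $t>0$ is well defined. Let $x$ be the Perron vector. Each graph admits the automorphism fixing the central cut-vertex of $C(m,q)$, respectively swapping the two degree-$3$ vertices $u_0,v_0$ of $P(m,p,q)$ and reversing every path; since the Perron vector is unique (Lemma \ref{pfc}), $x$ is invariant under this automorphism, so $x_{u_0}=x_{v_0}$ and the entries of $x$ along each path are palindromic. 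Every interior vertex of a path or cycle has degree $2$, so its entry obeys $\rho x_i = x_{i-1}+x_{i+1}$; together with the palindromic boundary data this forces $x_i = D\cosh\!\big(t(i-\tfrac{\ell}{2})\big)$ along a path or cycle of length $\ell$ joining two branch vertices of equal value.

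Next I would extract the secular equation. Writing
\[
f(\ell,t)=\frac{\cosh(t(\ell-2)/2)}{\cosh(t\ell/2)}=\cosh t-\sinh t\,\tanh\!\Big(\tfrac{t\ell}{2}\Big)
\]
for the ratio of a branch vertex's path-neighbor entry to its own entry, and applying the eigenvalue equation at the branch vertex, the identity $\rho=2\cosh t$ collapses to
\[
\coth t=\tanh\!\Big(\tfrac{tm}{2}\Big)+\tanh\!\Big(\tfrac{tp}{2}\Big)+\tanh\!\Big(\tfrac{tq}{2}\Big)
\]
for $P(m,p,q)$, and to $\coth t=\tanh(tm/2)+\tanh(tq/2)$ for $C(m,q)$ (here each cycle contributes two equal neighbor terms, and the resulting factor of two cancels). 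In both cases the right-hand side $h(t)$ strictly increases from $0$, while $\coth t$ strictly decreases to $1$, so the secular equation has a unique positive root, and this root is the value of $t$ realizing $\rho$.

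The decisive step is the monotonicity of this root under balancing. Since $\tanh$ is strictly concave on $(0,\infty)$, for each fixed $t>0$ the map $(a,b,c)\mapsto \tanh(ta/2)+\tanh(tb/2)+\tanh(tc/2)$ is strictly Schur-concave; hence a Robin--Hood transfer moving one unit from the largest of $m,p,q$ to the smallest---which strictly decreases $\max-\min$ while fixing the sum---strictly increases $h$ for every fixed $t>0$. Thus after balancing the curve $h$ lies strictly above the old one while $\coth t$ is unchanged; because $h$ increases and $\coth$ decreases, raising $h$ pushes the unique crossing point to a strictly smaller $t$, hence to a strictly smaller $\rho=2\cosh t$. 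Iterating these spread-reducing transfers (equivalently, invoking Schur-concavity against the majorization order) drives $\max-\min$ down while strictly decreasing $\rho$ at each step, establishing statement (1) and identifying the most balanced triple as the minimizer; the identical argument with two summands gives statement (2). I expect the main obstacle to be the bookkeeping in this last step: one must keep straight that lowering the spread \emph{raises} $h$ and therefore \emph{lowers} the root $t$, and verify that a single transfer toward balance is genuinely majorization-reducing so that strict concavity applies strictly. The set-up facts---that $\rho>2$ and that the Perron vector is palindromic---are routine once the automorphism is identified.
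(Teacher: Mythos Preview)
Your argument is correct. The paper does not supply its own proof of this lemma; it simply cites the result from Simi\'{c} and Koci\'{c} \cite{hg}. Your approach is nonetheless very much in the spirit of the hyperbolic machinery the paper imports from \cite{bc} and \cite{hg} (cf.\ Lemma \ref{lem:de} and the computations preceding Lemma \ref{bmpm}): those references also parametrize $\rho=2\cosh t$ and solve the degree-two recurrence via hyperbolic functions. What you add is a cleaner packaging. By exploiting the automorphism of $P(m,p,q)$ that swaps $u_0\leftrightarrow v_0$ and reverses each of the three paths (this automorphism exists for \emph{all} $m,p,q$, since reversing a path of any length sends one endpoint to the other), you force $x_{u_0}=x_{v_0}$ and collapse everything to the single scalar equation $\coth t=\tanh(tm/2)+\tanh(tp/2)+\tanh(tq/2)$; the analogous reflection through the degree-four vertex handles $C(m,q)$. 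After that, strict concavity of $\tanh$ on $(0,\infty)$ makes the right-hand side strictly Schur-concave, so any Robin--Hood transfer raises it pointwise and pushes the unique crossing with the decreasing function $\coth t$ to a smaller $t$. This actually proves the sharper statement that $\rho$ is Schur-convex in $(m,p,q)$, of which the informal ``$\rho$ decreases as $\max-\min$ decreases'' is a consequence; since every comparison the paper makes in Section 4 via Lemma \ref{p&c} is between triples that are comparable in the majorization order, your version covers all the applications.
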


However, for the graphs $B(m, p, q)$, differences between parameters are not sufficient to compare their spectral radii, unless we fix the middle parameter.

\begin{lem}[\cite{bc}]\label{bd}
If $p$ and $m+q$ are fixed, $\rho(B(m,p,q))$ decreases as $\max(m,q)-\min(m,q)$ decreases..
\end{lem}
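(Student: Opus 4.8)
The plan is to reduce the statement to a single balancing step and then analyze the transcendental equation that determines the spectral radius. Since $B(m,p,q)$ has a vertex of degree $3$ it has average degree $2|E|/|V| = 2(|V|+1)/|V| > 2$, so $\rho := \rho(B(m,p,q)) > 2$ and we may write $\rho = 2\cosh t$ with $t>0$. Because balancing $(m,q)$ with $m+q$ fixed is achieved by repeatedly replacing $(m,q)$ by $(m+1,q-1)$ for $m<q$, it suffices to prove that $\rho(B(m+1,p,q-1)) < \rho(B(m,p,q))$ whenever $3\le m\le q-2$; the full statement then follows by telescoping, since this step strictly decreases $\max(m,q)-\min(m,q)$.

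First I would exploit the reflective symmetry of the Perron vector $x$ inside each cycle. The eigenvalue equation at the degree-$2$ vertices of $C_m$ is the recurrence $\rho x_{u_i} = x_{u_{i-1}}+x_{u_{i+1}}$, whose symmetric solution ($x_{u_i}=x_{u_{m-i}}$) is $x_{u_i} = \alpha\cosh\!\big(t(i-\tfrac m2)\big)$. Substituting into the equation at the degree-$3$ attaching vertex $u_0$ collapses the entire cycle $C_m$ into a single boundary relation $x_{w_1}=g_m(t)\,x_{u_0}$, where $g_c(t):=2\sinh t\,\tanh(ct/2)$ is the \emph{load} of a cycle of length $c$; the cycle $C_q$ contributes the mirror relation $x_{w_{p-1}}=g_q(t)\,x_{v_0}$. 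Writing the path solution as $x_{w_j}=Ae^{tj}+Be^{-tj}$ and imposing these two relations gives a homogeneous $2\times2$ system in $(A,B)$; vanishing of its determinant yields the characteristic equation
\[
(g_m - e^{-t})(g_q - e^{-t}) = e^{-2tp}\,(e^t - g_m)(e^t - g_q),
\]
whose relevant root is the value $t$ with $\rho=2\cosh t$.

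The structural input is the behaviour of the loads under balancing. Regarding $g_c(t)$ as a function of a continuous parameter $c$ at fixed $t>0$, one computes $\partial_c g_c>0$ and $\partial_c^2 g_c<0$, so $g_c$ is strictly increasing and strictly concave in $c$ (and $\log g_c$ is concave as well). Consequently, with $m+q$ fixed, both $g_m+g_q$ and $g_mg_q$ strictly increase as $(m,q)$ becomes more balanced, and for the single step $(m,q)\to(m+1,q-1)$ concavity gives the sharper increment comparison $g_{m+1}-g_m > g_q-g_{q-1} > 0$. Taking logarithms in the characteristic equation (both sides are positive in the Perron regime $e^{-t}<g_m,g_q<e^t$) and letting $F(t)$ denote the difference of the two logged sides, the effect of the balancing step on $F$ at fixed $t$ is
\[
\delta F = c_m\,(g_{m+1}-g_m) - c_q\,(g_q-g_{q-1}), \qquad c_c := \frac{1}{g_c-e^{-t}}+\frac{1}{e^t-g_c}>0 .
\]

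The crux, and the step I expect to be the main obstacle, is to show $\delta F>0$ and then convert it into a strict decrease of the root. The increments satisfy $g_{m+1}-g_m>g_q-g_{q-1}$ by concavity, so $\delta F>0$ is immediate whenever $c_m\ge c_q$; since $c_c$ is a convex function of $g_c$ minimized at $g_c=\cosh t$, this holds as long as $g_m$ lies at least as far from $\cosh t$ as $g_q$, which covers the typical range. The delicate case is when $g_m$ and $g_q$ straddle $\cosh t$ with $g_q$ farther out, where one must combine the increment bound with a quantitative concavity estimate on $g_c$ to verify that $c_m(g_{m+1}-g_m)$ still dominates $c_q(g_q-g_{q-1})$; this is precisely the fine comparison that crude parameter-difference arguments cannot see, and it explains why the middle parameter $p$ must be frozen. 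Finally I would record that $F$ is strictly increasing through its relevant (simple) root, with $F(t)\to+\infty$ as $t\to\infty$, so that $\delta F>0$ forces the root, hence $t$ and $\rho=2\cosh t$, to decrease strictly under balancing, which completes the step and the lemma.
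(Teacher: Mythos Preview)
The paper does not supply a proof of this lemma; it is quoted from Simi\'c~\cite{bc}.  So there is no in-paper argument to compare against, but your attempt can still be assessed on its own merits.

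Your reduction to a single balancing step and the derivation of the characteristic equation
\[
(g_m-e^{-t})(g_q-e^{-t})=e^{-2tp}(e^{t}-g_m)(e^{t}-g_q),\qquad g_c(t)=2\sinh t\,\tanh(ct/2),
\]
are both correct, and the concavity of $c\mapsto g_c(t)$ that you record is genuine.  The problem is that you explicitly stop at the decisive point.  When $g_m$ and $g_q$ straddle $\cosh t$ with $g_q$ farther from the centre, you have $c_m<c_q$, and the bare concavity bound $g_{m+1}-g_m>g_q-g_{q-1}$ is no longer sufficient to force $\delta F>0$; you acknowledge this and say that ``one must combine the increment bound with a quantitative concavity estimate,'' but you never carry out that combination.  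That is exactly the computation the lemma hinges on, and it is not obvious: the weight $c_c=\dfrac{1}{g_c-e^{-t}}+\dfrac{1}{e^{t}-g_c}$ blows up near the ends of $(e^{-t},e^{t})$, so one needs a matching decay estimate on the increment $g_q-g_{q-1}$ there.  As written, the argument is a plausible outline, not a proof.

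Two secondary points also need justification before the implicit-function step goes through: (i) that the Perron value of $t$ actually lies in the regime $e^{-t}<g_m\le g_q<e^{t}$ where your logarithmic $F$ is defined (for small $t$ one has $g_m<e^{-t}$, so this is a nontrivial restriction on the relevant root), and (ii) that $F$ is strictly increasing at, and has a unique simple zero corresponding to, the Perron root---you assert both but prove neither.  Without these, even a verified $\delta F>0$ would not yield a strict decrease of the root.
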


Let $\B_n$ be the set of all bicyclic graphs with order $n$. By Lemma \ref{it} and \ref{ies}, the minimizer graph in $\B_n$ has no vertices of degree one, because otherwise we can remove the vertex of degree one and reinsert it to an internal path to get a graph with less spectral radius. Also, by Lemmas \ref{p&c} and \ref{bd}, it is expected that the minimizer graph is either $P(m, p, q)$ or $B(m, p, q)$ where the difference $\max(m,p,q)-\min(m,p,q)$ is as small as possible. In \cite{hg}, Simi\'{c} proved that this is the case. 

\begin{lem}[\cite{bc}] \label{b26}
Assume $n\geq 7$ and let $k=\lceil\frac{n}{3}\rceil$. The minimizer graphs in $\B_n$ are $P(k,n+1-2k,k)$ and $B(k,n+1-2k,k)$. 
\end{lem}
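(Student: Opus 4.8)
The plan is to reduce the problem to the three parametric families of Definition \ref{def:bicycle}, minimize $\rho$ within each family, and then compare the three minimizers. As already observed before the statement, Lemmas \ref{it} and \ref{ies} force any minimizer in $\B_n$ to have minimum degree at least $2$: the exceptional graph $\Tilde{D}_n$ is a tree and hence never bicyclic, so whenever a pendant vertex is present it can be deleted (not increasing $\rho$, by interlacing) and reinserted into an internal path (strictly decreasing $\rho$, by Lemma \ref{ies}). A bicyclic graph with minimum degree at least $2$ is a subdivision of one of exactly three base multigraphs of cyclomatic number $2$ and minimum degree $\ge 3$ (two vertices joined by three parallel edges; one vertex carrying two loops; two looped vertices joined by an edge), so it is isomorphic to precisely one of $P(m,p,q)$, $C(m,q)$, or $B(m,p,q)$. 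Counting vertices, the admissible parameters satisfy $m+p+q=n+1$ for the $P$ and $B$ families and $m+q=n+1$ for the $C$ family, and the hypothesis $n\ge 7$ guarantees that the candidate graphs are genuine simple bicyclic graphs (all parameters positive, cycle lengths at least $3$). It therefore suffices to minimize over each family and compare.

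For the theta family, Lemma \ref{p&c}(1) shows that, with $m+p+q=n+1$ fixed, $\rho(P(m,p,q))$ is minimized by the most balanced partition of $n+1$ into three parts; checking the three residues of $n$ modulo $3$ shows this partition is $(k,n+1-2k,k)$, so the theta minimizer is $P(k,n+1-2k,k)$. For the figure-eight family, Lemma \ref{p&c}(2) identifies the balanced $C(\lfloor\frac{n+1}{2}\rfloor,\lceil\frac{n+1}{2}\rceil)$ as its minimizer. For the dumbbell family, Lemma \ref{bd} shows that for fixed $p$ the cycle lengths should be balanced, so I may assume $|m-q|\le 1$. In the equal-cycle case $m=q$ I would invoke Lemma \ref{pb} to replace $\rho(B(m,p,m))$ by $\rho(P(m,p,m))$ and then apply Lemma \ref{p&c}(1) to optimize $p$; since the most balanced partition $(k,n+1-2k,k)$ has two equal parts in every residue class, these may be taken as the cycles, so $B(k,n+1-2k,k)$ is the best equal-cycle dumbbell and, by Lemma \ref{pb}, $\rho(B(k,n+1-2k,k))=\rho(P(k,n+1-2k,k))$.

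Two comparisons remain, and these are where the real work lies, since both fall outside the balancing lemmas \ref{p&c} and \ref{bd}. First, the dumbbells with $|m-q|=1$ (which occur exactly when $m+q=n+1-p$ is odd) cannot be reduced to theta graphs via Lemma \ref{pb}, and none of the stated lemmas transfers length between a cycle and the connecting path. I would treat $B(m,p,m+1)$ by a direct Perron-vector argument showing that relocating one vertex from the longer cycle onto the connecting path, passing to the equal-cycle graph $B(m,p+1,m)$ of the same order, strictly decreases $\rho$; this reduces the unequal case to the already-handled equal-cycle case, whence every dumbbell satisfies $\rho\ge \rho(B(k,n+1-2k,k))$ with equality only at the target. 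Second, I must show that every figure-eight has $\rho$ strictly larger than $\rho(P(k,n+1-2k,k))$. Here the obstruction is that the degree-$4$ center of $C(m,q)$ is a cut vertex but is incident to no cut edge, so Lemma \ref{b30} does not apply. Instead I would realize $C(m,q)$ as the graph obtained from a theta- or dumbbell-type graph $G'$ of the same order by relocating one edge at the center onto an adjacent branch vertex, so that Lemma \ref{dep} (applied in its stated increasing direction, with the center carrying the maximal Perron component of $G'$) yields $\rho(C(m,q))>\rho(G')\ge \rho(P(k,n+1-2k,k))$; a self-contained alternative is a direct characteristic-polynomial computation exploiting the symmetric equitable partition of the two cycles.

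Granting these two comparisons, the conclusion assembles immediately: the minimum over the $P$ family is $\rho(P(k,n+1-2k,k))$; the minimum over the $B$ family equals $\rho(B(k,n+1-2k,k))=\rho(P(k,n+1-2k,k))$, attained only in the equal-cycle case while every unequal-cycle dumbbell is strictly larger; and the minimum over the $C$ family strictly exceeds $\rho(P(k,n+1-2k,k))$. Hence $P(k,n+1-2k,k)$ and $B(k,n+1-2k,k)$ share the common minimal value and are exactly the minimizers in $\B_n$. I expect the main obstacle to be precisely the two comparisons of the previous paragraph, namely the dumbbell $p$-optimization across parities and the figure-eight versus theta inequality, both of which require a hands-on Perron-vector or characteristic-polynomial analysis rather than the structural balancing lemmas used everywhere else.
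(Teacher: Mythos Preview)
The paper does not prove this lemma: it is quoted from Simi\'{c}~\cite{bc} (note the citation in the lemma header), and the preceding paragraph only motivates why the statement is plausible. There is therefore no in-paper proof to compare your attempt against.

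That said, your outline tracks Simi\'{c}'s original argument closely: reduce to the three shapes, balance parameters via Lemmas~\ref{p&c} and~\ref{bd}, and identify the equal-cycle dumbbell with the theta graph through Lemma~\ref{pb}. The two obstacles you isolate are exactly the nontrivial content. For the figure-eight comparison, your proposed use of Lemma~\ref{dep} is precisely what the present paper later packages as Lemma~\ref{b<c}, so that gap closes cleanly. The unequal-cycle dumbbell is subtler: your intended step from $B(m,p,m+1)$ to $B(m,p+1,m)$ is not covered by any of the stated lemmas, and it is not a priori clear that this relocation always decreases $\rho$; Simi\'{c} handles the full $B$-family optimization by a difference-equation analysis (the machinery recalled around Lemma~\ref{lem:de}) rather than a single edge move, so if you pursue your route you should expect to compute rather than quote. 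Since the present paper only \emph{uses} Lemma~\ref{b26} and never revisits its proof, the appropriate resolution here is simply to acknowledge the citation.
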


Now, we present a few comparison results between the spectral radius of bicyclic graphs. 
Let 
$$x=(x_{u_0},\ldots,x_{u_{m-1}},x_{w_1},\ldots,x_{w_{p-1}},x_{v_0},\ldots,x_{v_{q-1}})^T$$ 
be the Perron vector of the graph $B(m,p,q)$ and let $\rho=\rho(B(m,p,q))$.
Then we have
\begin{equation} \label{eq:de}
    \rho x_v=\sum_{u \sim v} x_u.
\end{equation}

We will use the following elementary lemma to determine $x$ and $\rho$. 

\begin{lem}[{\cite[Lemma 1]{bc}}]\label{lem:de}
Define \[f_i(t,k,a,b)=\frac{b\sinh{it}+a\sinh{(k-i)t}}{\sinh{kt}}.\]
The difference equation 
\begin{equation}
\rho x_{i+1}=x_{i}+x_{i+2}\,(i=0,\ldots k-2)\quad\text{with}\quad x_0=a,x_k=b \label{rc}
\end{equation}
has the solution $x_i=f_i(t_{\rho},k,a,b)$, where $t_{\rho}=\ln(\frac{\rho+\sqrt{\rho^2-4}}{2})$ or equivalently $\rho=2\cosh{t_{\rho}}$.
\end{lem}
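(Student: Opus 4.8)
The plan is to treat the recurrence \eqref{rc} as a constant-coefficient linear difference equation, exhibit the proposed closed form as a solution, and invoke uniqueness to finish. Rewriting the relation as $x_{i+2}-\rho x_{i+1}+x_i=0$, its characteristic equation $\lambda^2-\rho\lambda+1=0$ has roots whose product is $1$; setting $\rho=2\cosh t_\rho$ these roots are exactly $e^{\pm t_\rho}$. Hence the solution space of the recurrence is spanned by $i\mapsto e^{it_\rho}$ and $i\mapsto e^{-it_\rho}$, and any particular solution is determined by two independent conditions. In particular the two-point data $x_0=a$, $x_k=b$ should pin down a unique solution, so it suffices to verify that $f_i(t_\rho,k,a,b)$ meets these conditions and solves the recurrence.

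First I would confirm the boundary conditions. Because $\sinh 0=0$, we immediately get $f_0=\dfrac{a\sinh(kt_\rho)}{\sinh(kt_\rho)}=a$ and $f_k=\dfrac{b\sinh(kt_\rho)}{\sinh(kt_\rho)}=b$. For the recurrence itself I would apply the sum-to-product identity $\sinh\alpha+\sinh\beta=2\sinh\frac{\alpha+\beta}{2}\cosh\frac{\alpha-\beta}{2}$ termwise to the numerator of $f_i+f_{i+2}$, which gives
\[
\sinh(it_\rho)+\sinh((i+2)t_\rho)=2\cosh t_\rho\,\sinh((i+1)t_\rho)
\]
and likewise
\[
\sinh((k-i)t_\rho)+\sinh((k-i-2)t_\rho)=2\cosh t_\rho\,\sinh((k-i-1)t_\rho).
\]
Combining these two identities yields $f_i+f_{i+2}=2\cosh t_\rho\cdot f_{i+1}=\rho f_{i+1}$, which is precisely \eqref{rc}.

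The one step that genuinely needs care is the unique solvability of the two-point boundary value problem, since everything else is mechanical. The homogeneous problem with $a=b=0$ has general solution $x_i=A(e^{it_\rho}-e^{-it_\rho})$, and $x_k=0$ forces $2A\sinh(kt_\rho)=0$; thus $x\equiv 0$ is the only solution precisely when $\sinh(kt_\rho)\neq 0$, i.e. $t_\rho\neq 0$, i.e. $\rho\neq 2$. This is also exactly the condition under which $f_i$ itself is well defined. I would therefore record that in the intended application $\rho$ is the spectral radius of a bicyclic graph, which strictly exceeds $2$, so $t_\rho>0$ and $\sinh(kt_\rho)>0$; this simultaneously makes $f_i(t_\rho,k,a,b)$ well defined and guarantees it is the unique solution of \eqref{rc}, completing the proof.
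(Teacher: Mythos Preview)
Your proof is correct. Note, however, that the paper does not supply its own proof of this lemma: it is quoted verbatim as \cite[Lemma 1]{bc} and used as a black box, so there is nothing in the paper to compare your argument against. Your verification---checking the boundary values, confirming the recurrence via the hyperbolic sum-to-product identity, and establishing uniqueness from $\sinh(kt_\rho)\neq 0$ when $\rho>2$---is the standard way to justify such a closed form and is entirely sound.
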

\medskip

Assume $x_{u_0}=a$ and $x_{v_0}=b$. By Lemma \ref{lem:de}, we have 
\begin{align*}
    x_{u_i} &= f_i(t_\rho, m, a, a) \text{  for }i =1, \ldots, m-1, \\
    x_{w_i} &= f_i(t_\rho, p, a, b) \text{  for }i =1, \ldots, p-1,\\
    x_{v_i} &= f_i(t_\rho, q, b, b) \text{  for }i =1, \ldots, q-1.
\end{align*}
The numbers $a$ and $b$ are determined by equations \eqref{eq:de} at vertices $u_0$ and $v_0$, that is, 
\begin{align*}
    2a\cosh{t_{\rho}}& =2f_1(t_{\rho},m,a,a)+f_1(t_{\rho},p,a,b),\\
    2b\cosh{t_{\rho}}& =2f_1(t_{\rho},q,b,b)+f_{p-1}(t_{\rho},p,a,b).
\end{align*}
Rearranging terms using the definition of $f_i$, we get
\begin{align} 
    a \cosh{t_{\rho}} - f_1(t_{\rho},m,a,a) - \frac{1}{2} f_1(t_{\rho},p,a,a)  & =-\frac{a-b}{2}\frac{\sinh{t_{\rho}}}{\sinh{pt_{\rho}}},\label{e1}  \\
    a \cosh{t_{\rho}} - f_1(t_{\rho},q,a,a) - \frac{1}{2} f_1(t_{\rho},p,a,a) & =\frac{a(a-b)}{2b}\frac{\sinh{t_{\rho}}}{\sinh{pt_{\rho}}}.\label{e2}
\end{align}

\begin{remark}\label{bperron}
Dividing both sides of (\ref{e1}), (\ref{e2}) by $a$ and subtracting (\ref{e1}) from (\ref{e2}), we have 
$$f_1(t_{\rho},m,1,1)-f_1(t_{\rho},q,1,1)=(a-b)\left(\frac{1}{2a}+\frac{1}{2b}\right)\frac{\sinh{t_{\rho}}}{\sinh{pt_{\rho}}}.$$
It is easy to check that $f_1(t_{\rho},x,1,1)$ is strictly decreasing in $x$.(\cite[Lemma 3]{bc}) Thus, if $m>q$ then $a<b$ and if $m=q$ then $a=b$. 
\end{remark}

The following is a slight modification of \cite[Equation (11)]{bc}.

\begin{lem} \label{bmpm}
Let $m,p$ be distinct positive integers greater than or equal to $3$.
Then, $$\rho(B(m,p,m))<\rho(B(m,m,p)).$$
\end{lem}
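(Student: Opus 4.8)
The plan is to turn both spectral radii into the transcendental \emph{secular equation} in the variable $t=t_\rho$ furnished by Lemma \ref{lem:de}, and then compare them through a single sign evaluation. First I would fix the abbreviation $c_k(t):=f_1(t,k,1,1)=\frac{\sinh t+\sinh((k-1)t)}{\sinh kt}$, which by \cite[Lemma 3]{bc} (quoted in Remark \ref{bperron}) is strictly decreasing in $k$ for every $t>0$. Starting from \eqref{e1} and \eqref{e2} and dividing by $a$ and $b$ respectively, the defining system for a graph $B(\mu,\pi,\nu)$ takes the compact form $a\,\alpha_{\mathrm{f}}=-(a-b)S$ and $b\,\alpha_{\mathrm{l}}=(a-b)S$, where $\alpha_{\mathrm{f}}=\cosh t-c_\mu-\tfrac12 c_\pi$, $\alpha_{\mathrm{l}}=\cosh t-c_\nu-\tfrac12 c_\pi$, and $S=\frac{\sinh t}{2\sinh \pi t}>0$. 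Eliminating the positive numbers $a,b$ gives the secular equation $H(t):=\alpha_{\mathrm{f}}\alpha_{\mathrm{l}}+S(\alpha_{\mathrm{f}}+\alpha_{\mathrm{l}})=0$, a mild rewriting of \cite[Eq.\ (11)]{bc}, whose largest positive root is exactly $t_\rho$.

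Applying this to $B(m,p,m)$, symmetry forces $a=b$ (Remark \ref{bperron}); adding the two scalar equations then gives $\alpha_{\mathrm f}=0$, so its Perron value $t_1$ satisfies the clean identity $\cosh t_1=c_m(t_1)+\tfrac12 c_p(t_1)$. For $B(m,m,p)$ the middle path has length $m$, so its secular function is $H(t)=\bigl(\cosh t-\tfrac32 c_m\bigr)\bigl(\cosh t-c_p-\tfrac12 c_m\bigr)+S(\alpha_{\mathrm f}+\alpha_{\mathrm l})$ with $S=\frac{\sinh t}{2\sinh mt}$. The key step is to evaluate $H$ at $t=t_1$: substituting $\cosh t_1=c_m+\tfrac12 c_p$ collapses the two factors to $\alpha_{\mathrm f}(t_1)=\tfrac12(c_p-c_m)$ and $\alpha_{\mathrm l}(t_1)=-\tfrac12(c_p-c_m)$. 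Their sum vanishes, killing the $S$-term entirely, and one is left with $H(t_1)=-\tfrac14\bigl(c_p(t_1)-c_m(t_1)\bigr)^2$. Since $m\ne p$ and $c_\bullet$ is strictly decreasing, $c_m(t_1)\ne c_p(t_1)$, whence $H(t_1)<0$.

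To finish, I would run a sign/limit argument. As $t\to\infty$ one has $c_k(t)\to 0$ and $S(t)\to 0$ (here $m\ge 3$ is used) while $\cosh t\to\infty$, so $H(t)\to+\infty$. The Perron value $t_2$ of $B(m,m,p)$ is the largest positive root of $H$, hence $H>0$ on $(t_2,\infty)$. Because $H(t_1)<0<\lim_{t\to\infty}H(t)$, the function $H$ has a root in $(t_1,\infty)$, and the largest such root is $t_2$; therefore $t_2>t_1$. Since $\rho=2\cosh t$ is increasing in $t>0$, this yields $\rho(B(m,p,m))=2\cosh t_1<2\cosh t_2=\rho(B(m,m,p))$, as claimed.

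The bookkeeping of which index serves as ``path'' versus ``cycle'' in each of the two graphs, and the elimination that produces $H$, are routine; the genuinely substantive point is the evaluation $H(t_1)=-\tfrac14(c_p-c_m)^2$, which succeeds precisely because the symmetry of $B(m,p,m)$ makes $\alpha_{\mathrm f}(t_1)$ and $\alpha_{\mathrm l}(t_1)$ exact negatives of each other. I expect the main obstacle to lie in rigorously justifying the last step, namely that $t_2$ really is the largest positive root of $H$ with $H>0$ beyond it, i.e.\ that the elimination introduces no spurious large roots. This I would argue by observing that for $t>0$ every root of $H$ corresponds, via the generic $f_i$-form solution of the recurrence \eqref{rc}, to an eigenvalue $2\cosh t\ge 2$ of the adjacency matrix, so no root can exceed the spectral radius $2\cosh t_2$.
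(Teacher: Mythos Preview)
Your proof is correct, but it takes a different route from the paper. The paper works at $t_\sigma=t_2$, the Perron parameter of $B(m,m,p)$: it uses the actual Perron data $(a,b)$ of $B(m,m,p)$ to build a positive test vector $x'$ on $B(m,p,m)$ via the same $f_i$-formulas, adds the two eigen-equations \eqref{mmp1}, \eqref{mmp2}, and obtains $\sigma x'_{v_0}-\sum_{u\sim v_0}x'_u=\frac{(a-b)^2}{2b}\frac{\sinh t_\sigma}{\sinh mt_\sigma}>0$; then Lemma~\ref{pfc}(3) gives $\rho(B(m,p,m))<\sigma$ in one stroke. You instead work at $t_1$, the Perron parameter of $B(m,p,m)$, derive the eliminated secular function $H$ for $B(m,m,p)$, and evaluate $H(t_1)=-\tfrac14(c_p-c_m)^2<0$ before finishing with an intermediate-value/largest-root argument. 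The algebraic heart is the same cancellation (your $\alpha_{\mathrm f}(t_1)+\alpha_{\mathrm l}(t_1)=0$ is exactly what makes the paper's sum $\frac{(a-b)^2}{2b}\,S$ nonnegative), but the paper's test-vector route is shorter because Perron--Frobenius replaces the need to argue that $t_2$ is the largest positive root of $H$ and that no spurious roots lie beyond it. Your closing justification that every positive root of $H$ yields a genuine eigenvalue (so cannot exceed $t_2$) is sound and settles that point, but it is extra work the paper's argument sidesteps.
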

\begin{proof}
For the Perron vector
\[x=(x_{u_0},\ldots,x_{u_{m-1}},x_{w_1},\ldots,x_{w_{m-1}},x_{v_0},\ldots,x_{v_{p-1}})^T\]
of $B(m,m,p)$, let $x_{u_0}=a$ and $x_{v_0}=b$. Let $\sigma =\rho(B(m,m,p))$ and $t_{\sigma}=\ln(\frac{\sigma+\sqrt{\sigma^2-4}}{2})$. 
By the above discussion, $a$ and $b$ must satisfy
\begin{align*}
    2a\cosh{t_{\sigma}}& =2f_1(t_{\sigma},m,a,a)+f_1(t_{\sigma},m,a,b),\\
    2b\cosh{t_{\sigma}}& =2f_1(t_{\sigma},p,b,b)+f_{p-1}(t_{\sigma},m,a,b).
\end{align*}
By rearranging terms as before, we have
\begin{align} 
    a\cosh{t_{\sigma}} - f_1(t_{\sigma},m,a,a) - \frac{1}{2}f_1(t_{\sigma},m,a,a) & =-\frac{a-b}{2}\frac{\sinh{t_{\sigma}}}{\sinh{mt_{\sigma}}}, \label{mmp1}\\
    a\cosh{t_{\sigma}} - f_1(t_{\sigma},p,a,a) - \frac{1}{2}f_1(t_{\sigma},m,a,a) & =\frac{a(a-b)}{2b}\frac{\sinh{t_{\sigma}}}{\sinh{mt_{\sigma}}}.\label{mmp2}
\end{align}
Consider a vector 
$$x'=(x_{u_0}',\ldots,x_{u_{m-1}}',x_{w_1}',\ldots,x_{w_{p-1}}',x_{v_0}',\ldots,x_{v_{m-1}}')^T$$
constructed as 
\[x_{u_i}'=x_{v_i}'=f_i(t_{\sigma},m,a,a)~\text{and}~ x_{w_j}'=f_j(t_{\sigma},p,a,a)\]
for $i=0,1,\ldots,m-1$ and $j=0,1,\ldots,p-1$, which corresponds to the graph $B(m,p,m)$. By construction, $\rho x_v'= \sum_{u \sim v} x_u'$ for degree two vertices $v$ of $B(m,p,m)$. We claim that $\rho x_v'> \sum_{u \sim v} x_u'$ for $v= u_0$ or $v= v_0$, which implies  $\rho(B(m,p,m))<\sigma$ by Theorem \ref{pfc}.

By adding \eqref{mmp1} and \eqref{mmp2}, we have
\begin{align*}
    \sigma x_{v_0}'-\sum_{u \sim v_0}x_u'& = 2a \cosh{t_{\sigma}}-2 f_1(t_{\sigma},m,a,a)- f_{p}(t_{\sigma},p,a,a)\\
              & = a\left(\frac{a-b}{2b}-\frac{a-b}{2a}\right)\frac{\sinh{t_{\sigma}}}{\sinh{mt_{\sigma}}}\\
              & = a(a-b)\left(\frac{1}{2b}-\frac{1}{2a}\right)\frac{\sinh{t_{\sigma}}}{\sinh{mt_{\sigma}}}\\
              & = \frac{(a-b)^2}{2b}\frac{\sinh{t_{\sigma}}}{\sinh{mt_{\sigma}}}.
\end{align*}
Since $\sigma>2$ by Lemma \ref{se2}, $t_{\sigma}> 0$ and hence $\sigma x_{v_0}' > \sum_{u \sim v_0}x_u'$. By the same argument, we also have $\sigma x_{u_0}' > \sum_{u \sim u_0}x_u'$, which completes the proof.
\end{proof}

\begin{lem}\label{b<c}
Let $m,p,q$ be positive integers with $m\geq q \geq3$.
Then, $$\rho(B(m,p,q))<\rho(C(m+p,q)).$$
\end{lem}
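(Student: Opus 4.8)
The plan is to realize $C(m+p,q)$ as the result of a single edge-relocation applied to $B(m,p,q)$, and then to invoke Lemma~\ref{dep} with the Perron-component inequality supplied by Remark~\ref{bperron}.

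First I would fix the labeling of $B(m,p,q)$ from Figure~\ref{Fig:PCB}: the cycle $C_m$ is $u_0 u_1 \cdots u_{m-1} u_0$, the internal path is $u_0 w_1 \cdots w_{p-1} v_0$, and the cycle $C_q$ is $v_0 v_1 \cdots v_{q-1} v_0$. The key structural observation is that deleting the edge $u_0 u_{m-1}$ and adding the edge $v_0 u_{m-1}$ turns $B(m,p,q)$ into $C(m+p,q)$: after the move the vertices $v_0, u_{m-1}, u_{m-2}, \ldots, u_0, w_1, \ldots, w_{p-1}$ form a cycle of length $(m-1)+p+1 = m+p$ through $v_0$, while $C_q$ remains attached at $v_0$, so $v_0$ becomes the vertex shared by $C_{m+p}$ and $C_q$. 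I would check that this move is legitimate for all $m \ge 3$, $p \ge 1$, $q \ge 3$: the relocated neighbor $u_{m-1}$ is adjacent to $u_0$ but not to $v_0$ in $B(m,p,q)$, and $u_{m-1} \neq v_0$, so no multi-edge or loop is created and the graph stays connected.

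With the move identified, the inequality follows from Lemma~\ref{dep} applied with $v = u_0$ (the vertex losing the neighbor), $u = v_0$ (the vertex gaining it), and $s = 1$, $v_1 = u_{m-1}$. The hypothesis $v_1 \in N_G(u_0) \setminus N_G(v_0)$ holds by the previous paragraph, and the resulting graph $G^*$ is exactly $C(m+p,q)$. The remaining condition to verify is $x_{v_0} \ge x_{u_0}$, where $x$ is the Perron vector of $B(m,p,q)$. This is precisely where the assumption $m \ge q$ enters: writing $a = x_{u_0}$ and $b = x_{v_0}$, Remark~\ref{bperron} gives $a \le b$ whenever $m \ge q$ (with $a < b$ when $m > q$ and $a = b$ when $m = q$). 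Hence $x_{v_0} \ge x_{u_0}$, and Lemma~\ref{dep} yields the strict inequality $\rho(B(m,p,q)) < \rho(C(m+p,q))$.

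The argument is short, so there is no deep computational obstacle; the substantive part is spotting the correct edge relocation, namely that detaching one edge of $C_m$ and reattaching its loose end at $v_0$ precisely merges the $C_m$-and-path portion into the single cycle $C_{m+p}$. The one point requiring care is the orientation of Lemma~\ref{dep}: the edge must be moved toward the larger Perron component, which forces moving it from the $C_m$-side vertex $u_0$ to the $C_q$-side vertex $v_0$, and this is consistent only because $m \ge q$ makes $x_{u_0} \le x_{v_0}$ via Remark~\ref{bperron}. Note also that Lemma~\ref{dep} delivers a strict inequality even in the boundary case $m = q$ where $x_{u_0} = x_{v_0}$, so no separate treatment of that case is needed.
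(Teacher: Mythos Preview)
Your proof is correct and follows essentially the same approach as the paper: apply Lemma~\ref{dep} to $B(m,p,q)$ by moving one edge of the $C_m$-cycle from $u_0$ to $v_0$, using Remark~\ref{bperron} (with $m\ge q$) to guarantee $x_{v_0}\ge x_{u_0}$. The paper's proof is terser and simply says ``take $u=v_0$, $v=u_0$ and $u_1\in N(v)\setminus N(u)$''; your version spells out the edge relocation and verifies that the resulting graph is indeed $C(m+p,q)$, but the argument is the same.
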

\begin{proof}
Let $x$ be the Perron vector of $B(m,p,q)$. 
By Remark \ref{bperron}, since $m\geq q$, we have $x_{v_0}\le x_{u_0}$. Then Lemma \ref{dep} applies, where we take $u=v_0$, $v=u_0$ and $u_1\in N(v)\setminus N(u)$. 
\end{proof}

\begin{lem}\label{b=}
Let $m,p$ be positive integers with $\min(m,p)\geq q\geq3$.
Then, $$\rho(B(m,p,q))\geq\rho(B(m,p-1,q+2)),$$
with equality if and only if $m=p=q$.
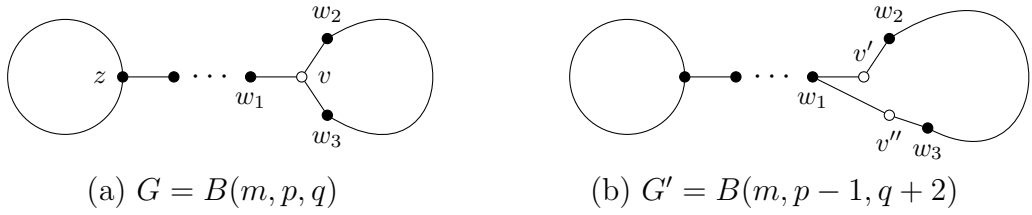
\begin{figure}[h!]
\centering
\begin{tikzpicture}
      [scale=.34,auto=left,every node/.style={scale=1}]
      \tikzset{Bullet/.style={circle,draw,fill=black,scale=0.35},Bullet1/.style={circle,draw,fill=white,scale=0.35}}
      \node[Bullet,label=left:{\footnotesize$z$}] (uu) at (-3.5,0) {};
      \node[Bullet1,label=right:{\footnotesize$v$}] (vv) at (3.5,0)  {};
      \draw[scale=0.5](-11.5,0) circle (4.5);
      \node[Bullet] (uu2) at (-1.5,0)  {};
      \node[Bullet,label=below:{\footnotesize$w_{1}$}] (vv2) at (1.5,0) {};
      \node[Bullet,label=above:{\footnotesize$w_2$}] (v1) at (4.5,1.5)  {};
      \node[Bullet,label=below:{\footnotesize$w_{3}$}] (vr) at (4.5,-1.5)  {};

      \node(dots) at (0,0){$\cdots$};
      \node at (0,-4.5) {(a)\ $G=B(m,p,q)$};
      \foreach \from/\to in {v1/vv,vr/vv,uu/uu2,vv/vv2}
        \draw[black] (\from) -- (\to);
        \draw (4.5,1.5).. controls(10,5) and (10,-5)..  (4.5,-1.5);
\end{tikzpicture}
\hfil
\begin{tikzpicture}
      [scale=.34,auto=left,every node/.style={scale=1}]
      \tikzset{Bullet/.style={circle,draw,fill=black,scale=0.35},Bullet1/.style={circle,draw,fill=white,scale=0.35}}
      \node[Bullet] (uu) at (-3.5,0) {};
      \node[Bullet1,label=above:{\footnotesize$v'$}] (vv) at (3.5,0)  {};
      \draw[scale=0.5](-11.5,0) circle (4.5);
      \node[Bullet] (uu2) at (-1.5,0)  {};
      \node[Bullet,label=below:{\footnotesize$w_{1}$}] (vv2) at (1.5,0) {};
      \node[Bullet,label=above:{\footnotesize$w_2$}] (v1) at (4.5,1.5)  {};
      \node[Bullet,label=below:{\footnotesize$w_{3}$}] (yn-2) at (6,-2)  {};
      \node[Bullet1,label=below:{\footnotesize$v''$}] (vr) at (4.5,-1.5)  {};

      \node(dots) at (0,0){$\cdots$};
      \node at (0,-4.5) {(b)\ $G'=B(m,p-1,q+2)$};
      \foreach \from/\to in {v1/vv,vr/vv2,uu/uu2,vv/vv2,vr/yn-2}
        \draw[black] (\from) -- (\to);
        \draw (4.5,1.5).. controls(11.5,6) and (11.5,-5)..  (6,-2);
    \end{tikzpicture}
    \caption{$\rho(G')\leq \rho(G)$}
    \label{bcompare}
\end{figure}
\end{lem}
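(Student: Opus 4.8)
The graph $G'=B(m,p-1,q+2)$ has one more vertex than $G=B(m,p,q)$, so this is a Perron--Frobenius comparison across different orders. The plan is to bound $\rho(G')$ from above by $\rho(G)$ using the Collatz--Wielandt form of Perron--Frobenius (Lemma \ref{pfc}(3)) with a test vector on $G'$ built from the Perron vector of $G$. Write $\rho=\rho(G)$, $t=t_\rho$, let $x$ be the Perron vector of $G$, and set $b=x_{v_0}$, $c=x_{w_{p-1}}$. On $G'$ I keep $y=x$ on the left cycle and on the path $u_0,w_1,\dots,w_{p-1}$; on the enlarged cycle $C_{q+2}$ attached at $w_{p-1}$, labelling its vertices $w_{p-1}=z_0,z_1,\dots,z_{q+1}$, I set $y_{z_i}=f_i(t,q+2,c,c)$. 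By Lemma \ref{lem:de} this makes $\rho y_v=\sum_{u\sim v}y_u$ hold exactly at every degree-two cycle vertex $z_1,\dots,z_{q+1}$, and it holds at every copied vertex since its neighborhood is unchanged. Hence $w_{p-1}$ is the only vertex where $A(G')y$ can differ from $\rho y$.

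So the comparison collapses to the sign of a single defect. Using $f_{q+1}(t,q+2,c,c)=f_1(t,q+2,c,c)$ and the equation \eqref{eq:de} of $G$ at $w_{p-1}$ (namely $\rho c=x_{w_{p-2}}+b$) to eliminate $x_{w_{p-2}}$, I would compute
$$\rho y_{w_{p-1}}-\sum_{u\sim w_{p-1}}y_u=b-2f_1(t,q+2,c,c)=:\Delta.$$
If $\Delta\ge0$ then $A(G')y\le\rho y$, and Lemma \ref{pfc}(3) gives $\rho(G')\le\rho$, strictly if $\Delta>0$; if $\Delta=0$ then $y$ is the Perron vector of $G'$ and the radii coincide. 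Thus it remains to prove $\Delta\ge0$ with equality exactly when $m=p=q$. Feeding \eqref{eq:de} of $G$ at $v_0$, i.e. $\rho b=c+2f_1(t,q,b,b)$, one gets $c=b\cdot\frac{2\sinh t\,(\cosh qt-1)}{\sinh qt}$, and after clearing positive factors $\Delta\ge0$ becomes the purely hyperbolic inequality
$$\sinh qt\,\sinh\bigl((q+2)t\bigr)\ \ge\ 4\sinh t\,(\cosh qt-1)\bigl(\sinh t+\sinh\bigl((q+1)t\bigr)\bigr),$$
equivalently $\psi_q(t)\le1$, where $\psi_q$ denotes the ratio of the right side to the left.

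To finish I would analyze $\psi_q$ on $(0,\infty)$. A short expansion gives $\psi_q(0^+)=0$ and $\psi_q(\infty)=2$, and the key claim is that $\psi_q$ is strictly increasing; the substitution $u=e^t$ turns this into a rational inequality in $u>1$ that I expect to reduce to a manifestly positive numerator. Granting monotonicity, $\psi_q=1$ has a unique root $t^\ast$ and $\psi_q(t)\le1\iff t\le t^\ast$. I would then identify $t^\ast$ with $B(q,q,q)$: its symmetric Perron data satisfy the characteristic relation $\sinh\bigl((q+1)t\bigr)=2\sinh\bigl((q-1)t\bigr)+3\sinh t$ obtained from \eqref{eq:de} at the attachment vertex, and substituting this relation into $\psi_q$ gives $\psi_q(t)=1$ by a direct hyperbolic identity, so $t^\ast=t_{\rho(B(q,q,q))}$. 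Finally, since $m,p\ge q$, the graph $B(m,p,q)$ arises from $B(q,q,q)$ by inserting degree-two vertices on internal paths (the two cycles and the middle path), so Lemma \ref{ies} yields $\rho(B(m,p,q))\le\rho(B(q,q,q))$, i.e. $t\le t^\ast$, with equality iff no insertion occurs, i.e. iff $m=p=q$. Combining, $\Delta\ge0$ with equality precisely when $m=p=q$.

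The main obstacle is the hyperbolic step: proving $\psi_q$ strictly monotone and pinning its unit level set exactly at $t_{\rho(B(q,q,q))}$ through the $B(q,q,q)$ identity. Everything else — the vertex-by-vertex check that $y$ is ``almost'' an eigenvector and the reduction of the equality case to Lemma \ref{ies} — is bookkeeping. The genuinely delicate point is that $\Delta$ is \emph{not} nonnegative for all $t>0$ (it turns negative for large $t$), so the proof really needs the a priori bound $t\le t_{\rho(B(q,q,q))}$ furnished by subdivision monotonicity.
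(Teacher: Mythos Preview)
Your test-vector setup is correct: the defect lands entirely at $w_{p-1}$ and equals $\Delta=b-2f_1(t,q+2,c,c)$. But the route you propose for proving $\Delta\ge0$ --- showing $\psi_q$ is strictly increasing, pinning its unit level set at $t_{\rho(B(q,q,q))}$, and then invoking Lemma~\ref{ies} for the a priori bound $t\le t_{\rho(B(q,q,q))}$ --- has real gaps that you yourself flag. Neither the monotonicity of $\psi_q$ nor the identity $\psi_q(t_{\rho(B(q,q,q))})=1$ is verified, and the first is not a one-line calculation: after the substitution $u=e^t$ you face a rational function of degree roughly $4q$ whose numerator is not ``manifestly positive''. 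More to the point, by passing to a function of $t$ alone you have discarded the structural information that $t$ comes from a specific graph $B(m,p,q)$ with $m,p\ge q$, and you then have to recover that information indirectly through $B(q,q,q)$ and subdivision monotonicity.

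The paper bypasses all of this by invoking the vertex-splitting Lemma~\ref{b30}. Since $vw_1$ (with $v=v_0$ and $w_1$ its path-neighbour) is a cut edge and $d(v)=3$, that lemma says the split in Figure~\ref{bcompare} weakly decreases the spectral radius, with equality iff all three neighbour-entries of $v$ coincide, \emph{provided} the cut-edge neighbour carries the minimum Perron entry among $N(v)$. So the entire proof reduces to checking $x_{w_1}\le x_{w_2}=x_{w_3}$, where $w_2,w_3$ are the two cycle-neighbours of $v$. Writing these via $f_1$, this is immediate from two observations already in the paper: $m\ge q$ forces $x_z\le x_v$ (Remark~\ref{bperron}), whence $x_{w_1}\le f_1(t,p,x_v,x_v)$; and $p\ge q$ forces $f_1(t,p,x_v,x_v)\le f_1(t,q,x_v,x_v)=x_{w_2}$ since $f_1(t,k,1,1)$ is decreasing in $k$. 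Equality in both steps means $x_z=x_v$ and $p=q$, i.e.\ $m=p=q$.

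So your hyperbolic inequality, even if true, is doing far more work than necessary. The hypotheses $m\ge q$ and $p\ge q$ act \emph{directly} on the Perron entries at the three neighbours of $v_0$; once you see that, the black-box splitting lemma finishes in one line, including the equality case.
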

\begin{proof}
We apply Lemma \ref{b30}. Let $G$ be the graph $B(m,p,q)$ as in Figure \ref{bcompare} (a). Then $vw_1$ is a cut edge of $G$. For the Perron vector $x=(x_u\,|\,u\in V(G))$, we have 
\begin{align*}
    x_{w_1} & = f_1(t,p,x_z,x_v),\\
    x_{w_2} & = f_1(t,q,x_v,x_v) = f_{p-1}(t,q,x_v,x_v) = x_{w_3}
\end{align*}
where $\rho(G)=2\cosh t$. Since $m\geq q$, we have $x_z\leq x_v$ by Remark \ref{bperron}. Thus, 
\[ 
    f_1(t,q,x_v,x_v) \ge f_1(t,p,x_v,x_v) \ge f_1(t,p,x_z,x_v),
\]
where the first inequality holds because $p\ge q$. Therefore $x_{w_1}=\min_{w\in N_G(v)}{x_w}$ and hence $\rho(B(m,p,q))\geq\rho(B(m,p-1,q+2))$ by Lemma \ref{b30}. 

The equality holds if and only if $x_{w_1}=x_{w_2}=x_{w_3}$. In the above inequality, this holds if and only if $x_z=x_v$ and $p=q$, or equivalently $m=p=q$.
\end{proof}


\section{Minimizer graphs with independence number $\lceil\frac{n}{2}\rceil-1$}
In this section, we determine the graphs with minimal spectral radius in $\displaystyle \G_{n,\lceil\frac{n}{2}\rceil-1}.$ Since $\alpha(T)\geq \lceil\frac{n}{2}\rceil$ for any tree $T$ of order $n$, we will only consider non-tree graphs. In \cite{sp}, Smith showed that the only graphs with spectral radius less than two are the finite simply-laced Dynkin diagrams and the only graphs with spectral radius equal to two are the extended simply-laced Dynkin diagrams. The only non-tree graphs among them is the cycle $C_n$. Hence we have the following. 
\begin{lem}[\cite{sp}]\label{se2}
Let $G$ be a non-tree graph of order $n$. Then $\rho(G)\ge 2$ and $\rho(G)= 2$ if and only if $G$ is the cycle $C_n$. 
\end{lem}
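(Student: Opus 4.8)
The plan is to prove the two assertions separately, relying only on the elementary fact that a connected $2$-regular graph has spectral radius $2$, together with interlacing (Lemma \ref{it}) and the strict edge-deletion monotonicity (Lemma \ref{de}). First I would record that $\rho(C_m)=2$ for every $m\ge 3$: the cycle $C_m$ is connected and $2$-regular, so by Lemma \ref{pfc} its Perron vector is the all-ones vector and the corresponding eigenvalue is $2=\rho(C_m)$. Since $G$ is connected and not a tree, it contains a cycle; choosing a shortest cycle $C_m$ (which is automatically chordless, hence an \emph{induced} subgraph of $G$), Lemma \ref{it} gives $\rho(G)\ge \rho(C_m)=2$. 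This settles the inequality.

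For the equality case I would assume $\rho(G)=2$ and argue by contradiction that $G\not\cong C_n$ is impossible. Let $C_m$ be a shortest (hence induced) cycle, $m\ge 3$. If $m=n$ then $C_m$ is spanning, and since $G\ne C_m$ the graph $G$ would contain an edge outside $C_m$, i.e. a chord; but a chord produces a cycle shorter than $m=n$, contradicting minimality of $C_m$. Hence $m<n$, so some vertex lies outside $C_m$, and by connectivity $G$ contains an edge $cw$ with $c\in V(C_m)$ and $w\notin V(C_m)$. The subgraph $H$ on $V(C_m)\cup\{w\}$ with edge set $E(C_m)\cup\{cw\}$ is a cycle with one pendant edge. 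Deleting the pendant edge from the connected graph $H$ leaves $C_m$ together with an isolated vertex, whose spectral radius is $2$; thus Lemma \ref{de} gives $\rho(H)>2$. Since $H$ is a subgraph of $G$, monotonicity of the spectral radius under subgraph inclusion yields $\rho(G)\ge\rho(H)>2$, contradicting $\rho(G)=2$. Therefore $G\cong C_n$, and conversely $\rho(C_n)=2$ as already observed.

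The two points I would double-check, rather than any genuinely hard step, are the following. First, Lemma \ref{de} is invoked for an edge whose removal \emph{disconnects} $H$; this is legitimate because the lemma only requires the ambient graph $H$ to be connected, and the spectral radius of the resulting disconnected graph is the maximum over its components, namely $2$. Second, I use monotonicity of $\rho$ under passing to a (not necessarily induced) subgraph, which follows from entrywise domination of the padded adjacency matrices together with the Perron--Frobenius theory in Lemma \ref{pfc}. The main bookkeeping concern is the equality analysis, namely confirming that ``$G$ connected, non-tree, and $\ne C_n$'' forces the shortest cycle to be non-spanning; the clean observation that $m=n$ cannot coexist with $G\ne C_n$ removes the need to treat chords separately, reducing everything to the single pendant-edge configuration. (Alternatively, the statement is a direct special case of Smith's classification \cite{sp} quoted just before the lemma, since $C_n$ is the unique non-tree graph among the finite and extended simply-laced Dynkin diagrams; I prefer the self-contained argument above.)
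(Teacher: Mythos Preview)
Your argument is correct, but it differs from the paper's treatment. The paper does not give an independent proof of this lemma at all: it simply invokes Smith's classification \cite{sp} of connected graphs with spectral radius at most $2$ (the finite and extended simply-laced Dynkin diagrams) and observes that $C_n$ is the unique non-tree among them. Your proof is genuinely more elementary and self-contained, using only $\rho(C_m)=2$, interlacing (Lemma~\ref{it}) for the lower bound, and edge-deletion monotonicity (Lemma~\ref{de}) applied to the ``cycle plus one pendant edge'' configuration for the strict inequality in the equality case. What this buys you is independence from Smith's full classification; what the paper's route buys is brevity, since the lemma becomes a one-line corollary of a cited result. You already note this trade-off in your final parenthetical remark, so you are aware that your argument and the paper's are not the same. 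Both are valid; yours is the more transparent of the two.
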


Since $\alpha(C_n)=\lceil\frac{n}{2}\rceil-1$ for odd $n$, the following is immediate. 

\begin{thm} \label{nodd}
When $n$ is odd, the minimizer graph in $\displaystyle \G_{n,\lceil\frac{n}{2}\rceil-1}$ is the cycle $C_n$.
\end{thm}

From now on, we assume that $n$ is even. By Lemma \ref{de}, the minimizer graphs should have as small number of edges as possible. For a unicyclic graph $G$ of order $n$, the independence number $\alpha(G)\geq\lfloor\frac{n}{2}\rfloor$.(\cite[Exercise 3.1.41]{W}) Hence when $n$ is even, $\alpha(G)\geq\frac{n}{2}>\lceil\frac{n}{2}\rceil-1$ for any unicyclic graph of order $n$.

Now we consider the bicyclic graphs. It is elementary to check the independence number of graphs of $P(m, p, q)$, $C(m, q)$ and $B(m,p,q)$.

\begin{lem} \label{indepPB}
Let $n$ be the order of the graph. 
\begin{enumerate}
\item $\alpha(P(m,p,q))=\begin{cases}
      \lceil\frac{n}{2}\rceil-1, & \mbox{if exactly two of }m,p,q~\text{are odd},\\
      \lceil\frac{n}{2}\rceil,  & \mbox{otherwise}.
  \end{cases}$
\item $\alpha(C(m,q))=\begin{cases}
      \lceil\frac{n}{2}\rceil-1, & \mbox{if both of }m,q~\text{are odd},\\
      \lceil\frac{n}{2}\rceil,  & \mbox{otherwise}.
  \end{cases}$
\item $\alpha(B(m,p,q))=\begin{cases}
     \lceil\frac{n}{2}\rceil-1, & \mbox{if at least two of }m,p,q~\text{are odd},\\
     \lceil\frac{n}{2}\rceil, & \mbox{otherwise.}
  \end{cases}$
\end{enumerate}
\end{lem}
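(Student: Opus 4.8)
The plan is to compute each of the three independence numbers \emph{exactly}, as a sum of floor terms, and only translate into the stated parity dichotomy at the very end. The two elementary inputs are the independence numbers of the building blocks, $\alpha(C_\ell)=\lfloor \ell/2\rfloor$ for a cycle on $\ell$ vertices and $\alpha(P_\ell)=\lceil \ell/2\rceil$ for a path on $\ell$ vertices, together with the trivial fact that for any vertex partition $V(G)=V_1\sqcup\cdots\sqcup V_r$ one has $\alpha(G)\le \sum_i \alpha(G[V_i])$, since an independent set meets each $G[V_i]$ in an independent set. The matching lower bounds will always come from assembling maximum independent sets of the blocks that \emph{avoid the cut vertices}, so that the handful of bridging edges are automatically respected. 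Throughout I write $o$ for the number of odd parameters and recall the identity $\lceil(\ell-1)/2\rceil=\lfloor \ell/2\rfloor$.

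For $C(m,q)$ I would partition $V$ into the whole cycle $C_m$ and the $q-1$ vertices of $C_q$ other than the shared vertex $z$; the latter induce a path on $q-1$ vertices. This gives $\alpha\le \lfloor m/2\rfloor+\lceil(q-1)/2\rceil=\lfloor m/2\rfloor+\lfloor q/2\rfloor$, and the matching construction takes a maximum independent set of $C_m$ omitting $z$ (still of size $\lfloor m/2\rfloor$) together with a maximum independent set of the path, the union being independent precisely because $z\notin S$ kills the only edges crossing between the blocks. For $B(m,p,q)$ the same idea is cleaner still: partition $V$ into $C_m$, $C_q$, and the $p-1$ internal path vertices (which induce a path), bound each block, and build the extremal set from maximum independent sets of the two cycles omitting the attaching vertices $u_0,v_0$ plus any maximum independent set of the middle path. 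This yields $\alpha(C(m,q))=\lfloor m/2\rfloor+\lfloor q/2\rfloor$ and $\alpha(B(m,p,q))=\lfloor m/2\rfloor+\lfloor p/2\rfloor+\lfloor q/2\rfloor$.

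The genuinely delicate case, and where I expect the main obstacle, is the theta graph $P(m,p,q)$, because here \emph{every} vertex partition overcounts: splitting off one full path $P_{m+1}$ and the two sets of internal vertices gives only $\alpha\le \Sigma+1$, where $\Sigma:=\lfloor m/2\rfloor+\lfloor p/2\rfloor+\lfloor q/2\rfloor$, which is off by one unless all three parameters are odd. To pin down the exact value I would instead condition on the membership of the two degree-three hubs $u_0,v_0$ in a maximum independent set $S$. For each of the four patterns the internal vertices of the three internally disjoint paths decouple into three independent subproblems, each asking for a maximum independent set on a path with one or both endpoints forbidden, solved by the path formula. Summing and maximizing shows that excluding both hubs attains $\Sigma$, each one-hub pattern attains $\Sigma+o-2$, and including both hubs attains $\Sigma-1$; hence $\alpha(P(m,p,q))=\Sigma$ unless $o=3$, in which case the one-hub pattern gains exactly one and $\alpha=\Sigma+1$. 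Some care is needed when a parameter equals $1$ (a hub edge), where the both-hubs pattern becomes infeasible, but since that pattern never wins it does not affect the maximum. This hub analysis is exactly what separates the theta graph from the dumbbell $B$, where all-odd parameters give a \emph{smaller} value.

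Finally I would convert each closed form into the lemma. With $n$ the order of the graph, the identity $\lfloor m/2\rfloor+\lfloor p/2\rfloor+\lfloor q/2\rfloor=(m+p+q-o)/2=(n+1-o)/2$ (and its two-term analogue for $C(m,q)$), together with $n\equiv o+1\pmod 2$, reduces everything to a one-line check in each of the cases $o=0,1,2,3$: the value equals $\lceil n/2\rceil$ except exactly when $o=2$ for $P$ and $C$, or $o\ge 2$ for $B$, where it drops to $\lceil n/2\rceil-1$. Reading these conditions off the parities of $m,p,q$ gives the three displayed formulas.
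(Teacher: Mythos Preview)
Your argument is correct and complete. The paper itself does not actually prove this lemma: it merely states that ``it is elementary to check the independence number'' of these three graph families and then records the result. Your write-up therefore supplies exactly the verification that the paper omits.

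A couple of minor remarks. First, your partition bound $\alpha(G)\le\sum_i\alpha(G[V_i])$ together with the cycle-avoiding constructions handles $C(m,q)$ and $B(m,p,q)$ cleanly; the only point worth making explicit is that a maximum independent set of a cycle can always be chosen to miss any prescribed vertex (immediate from vertex-transitivity), which you use for the lower bounds. Second, in the theta case your hub analysis is the right idea and the arithmetic checks out: the one-hub contribution on an internal path of length $\ell-1$ with one end forbidden is $\lceil(\ell-2)/2\rceil=\lfloor\ell/2\rfloor-[\ell\text{ even}]$, giving $\Sigma+o-2$ as claimed, and the both-hubs contribution $\lceil(\ell-3)/2\rceil=\lfloor\ell/2\rfloor-1$ gives $\Sigma-1$; your observation that the both-hubs pattern is infeasible when some parameter equals $1$ but never attains the maximum anyway is exactly the right way to dispose of that edge case. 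The final parity translation via $\Sigma=(n+1-o)/2$ and $n\equiv o+1\pmod 2$ is correct in all cases and recovers the three displayed formulas.
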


When $k=\lceil\frac{n}{3}\rceil$ is odd, that is, when $n\equiv 2 \pmod{6}$, the graph $B(k,k,k)$ and $P(k,k,k)$ are minimizer graphs in $\B_n$ by Lemma \ref{b26}. By Lemma \ref{indepPB}, $\alpha(B(k,k,k)) =\lceil\frac{n}{2}\rceil-1$ and $\alpha(P(k,k,k)) =\lceil\frac{n}{2}\rceil$. Hence we have the following.

\begin{thm} \label{n26}
If $n\equiv 2 \pmod{6}$ and $n\geq 7$, the minimizer graph in $\displaystyle \G_{n,\lceil\frac{n}{2}\rceil-1}$ is isomorphic to $B(k,k,k)$, where $k=\lceil\frac{n}{3}\rceil$.
\end{thm}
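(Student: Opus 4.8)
The plan is to reduce the optimization over the whole class $\G_{n,\lceil n/2\rceil-1}$ to the subclass $\B_n$ of bicyclic graphs, where Lemma \ref{b26} already pins down the minimizers. Fix a minimizer $G^*$ in $\G_{n,\lceil n/2\rceil-1}$; it exists since the class is finite. First I would record that $G^*$ cannot be a tree (every tree of order $n$ has independence number at least $\lceil n/2\rceil$), nor, since $n$ is even here, unicyclic (a unicyclic graph of even order $n$ has independence number at least $n/2=\lceil n/2\rceil$). Hence $|E(G^*)|\ge n+1$, so $G^*$ is bicyclic or has strictly more edges.

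Next I would identify the target graph and its place in $\B_n$. Writing $n=6j+2$ with $j\ge 1$, one checks $k=\lceil n/3\rceil=2j+1$ is odd and $n+1-2k=k$, so by Lemma \ref{b26} the minimizers in $\B_n$ are exactly $P(k,k,k)$ and $B(k,k,k)$, which share the same spectral radius by Lemma \ref{pb}. Since $k$ is odd, Lemma \ref{indepPB} gives $\alpha(B(k,k,k))=\lceil n/2\rceil-1$ (all three parameters are odd, so at least two are) while $\alpha(P(k,k,k))=\lceil n/2\rceil$ (all three odd is not the ``exactly two'' case). Thus $B(k,k,k)\in\G_{n,\lceil n/2\rceil-1}$ but $P(k,k,k)\notin\G_{n,\lceil n/2\rceil-1}$, and in particular the minimality of $G^*$ yields $\rho(G^*)\le\rho(B(k,k,k))$.

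The key step is to rule out $|E(G^*)|\ge n+2$. If that held, $G^*$ would have cyclomatic number at least $3$; repeatedly deleting an edge lying on a cycle keeps the graph connected (an edge of a connected graph is a bridge precisely when it lies on no cycle) and, by Lemma \ref{de}, strictly decreases the spectral radius. After deleting $|E(G^*)|-(n+1)\ge 1$ such edges I obtain a connected bicyclic spanning subgraph $G'\in\B_n$ with $\rho(G^*)>\rho(G')$. Lemma \ref{b26} gives $\rho(G')\ge\rho(B(k,k,k))$, whence $\rho(G^*)>\rho(B(k,k,k))$, contradicting $\rho(G^*)\le\rho(B(k,k,k))$. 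Therefore $G^*$ is bicyclic, i.e.\ $G^*\in\B_n\cap\G_{n,\lceil n/2\rceil-1}$.

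Finally, since $P(k,k,k)$ and $B(k,k,k)$ are the \emph{unique} minimizers in $\B_n$ by Lemma \ref{b26}, every other bicyclic graph of order $n$ has strictly larger spectral radius; as $P(k,k,k)$ fails the independence-number constraint, the unique minimizer inside $\B_n\cap\G_{n,\lceil n/2\rceil-1}$ is $B(k,k,k)$, forcing $G^*\cong B(k,k,k)$. The only genuinely non-bookkeeping step is the edge-deletion reduction in the previous paragraph; the mild care required there is to delete edges lying on cycles so that connectivity is preserved, which is what makes Lemma \ref{de} applicable at each stage and lets Lemma \ref{b26} furnish the final lower bound.
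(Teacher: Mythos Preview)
Your proof is correct and follows essentially the same approach as the paper: both argue that any $G\in\G_{n,\lceil n/2\rceil-1}$ has at least $n+1$ edges, reduce via edge deletion (Lemma~\ref{de}) to a bicyclic spanning subgraph, and invoke Lemma~\ref{b26} to bound below by $\rho(B(k,k,k))$, then use Lemma~\ref{indepPB} to exclude $P(k,k,k)$. Your version is more explicit about preserving connectivity during edge deletion and about why strict inequality rules out the case $|E(G^*)|\ge n+2$, details the paper's three-line proof leaves implicit.
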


\begin{proof}
A graph $G$ in $\G_{n,\lceil\frac{n}{2}\rceil-1}$ has at least $n+1$ edges. Since removing an edge strictly decreases the spectral radius, $\rho(G)\geq \rho(H)$ for some bicyclic graph $H$. Hence $\rho(G)\geq\rho(H)\geq \rho(B(k,k,k))$ with equality if and only if $G= B(k,k,k)$.
\end{proof}

Now, it remains to consider the cases $n\equiv 0$ or $4 \pmod{6}$. Our strategy is as follows. Since a graph $G$ in $\G_{n, \lceil \frac{n}{2}\rceil -1}$ has more than $n$ edges, $G$ has a minimal bicyclic subgraph, which is isomorphic to one of $P(m,p,q)$, $C(m, q)$ or $B(m,p,q)$. We first show that if $G$ contains either $C(m, q)$ or $P(m,p,q)$, then $\rho(G)$ is greater that that of the minimizer graph in Theorem \ref{mainthm}. If $G$ does not contain any of $C(m, q)$ or $P(m,p,q)$, all cycles in $G$ are disjoint and $G$ contains some $B(m,p,q)$ as an induced subgraph. We prove that in this case the minimum spectral radius is attained by the minimizer graph in Theorem \ref{mainthm}. Our main tool is Lemma \ref{it} and Lemma \ref{ies}. Namely, we take a minimal bicyclic subgraph and remove all vertices outside of it and reinsert them to internal paths of the bicyclic subgraph. This process will decrease the spectral radius and we end up with the minimizer graph.

\begin{remark} \label{rmk:econd}
Let $k=\lceil\frac{n}{3}\rceil $. When $n\equiv 0$ or $4 \pmod{6}$, $k$ is even and $n=3k$ or $3k-2$, respectively. Hence if the bicyclic graphs $P(m,p,q)$ and $B(m,p,q)$ have order $n$, we have $m+p+q=3k+1$, or $3k-1$, respectively.
\end{remark}

\begin{prop}\label{Csub}
Let $n$ be an integer with $n\geq 7$ and $k=\lceil\frac{n}{3}\rceil$ even. Suppose that a connected graph $G$ of order $n$ contains $C(m,q)$ as a subgraph for some $m,q\ge 3$. 
\begin{enumerate}
    \item if $n\equiv 0 \pmod{6}$, $\rho(G)>\rho(B(k+1,k-1,k+1))$.
    \item if $n\equiv 4 \pmod{6}$, $\rho(G)>\rho(B(k-1,k+1,k-1))$.
\end{enumerate}
\end{prop}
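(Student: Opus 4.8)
The plan is to bound $\rho(G)$ from below by the spectral radius of a spanning two-cycle graph $C(M,Q)$, and then to compare the most balanced such graph with the target $B$-graph of Theorem \ref{mainthm}.

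First I would reduce $G$ to a spanning $C(M,Q)$. Since $G$ is connected and contains $C(m,q)$, I delete edges of $G$ (each deletion strictly decreasing $\rho$ by Lemma \ref{de}) until reaching a spanning connected bicyclic subgraph $H$ whose two cycles are exactly those of $C(m,q)$; thus $\rho(G)\ge\rho(H)$ and $H$ is $C(m,q)$ with trees attached. In $H$ each cycle is an internal path whose two endpoints coincide at the degree-four vertex, so I can repeatedly remove a vertex lying off the cycles and reinsert it into one of these internal paths. By Lemmas \ref{it} and \ref{ies} (and since a bicyclic graph is never the tree $\tilde{D}_n$) each such move strictly decreases $\rho$, and after absorbing all off-cycle vertices I obtain a spanning $C(M,Q)$ with $M+Q=n+1$ and $M,Q\ge 3$, with $\rho(G)\ge\rho(C(M,Q))$. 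Since the sum $M+Q=n+1$ is fixed, Lemma \ref{p&c}(2) gives $\rho(C(M,Q))\ge\rho(C_0)$, where $C_0$ is the most balanced two-cycle graph on $n$ vertices. It therefore suffices to prove $\rho(C_0)>\rho(B_{\mathrm{target}})$, and reachability of a particular split is a non-issue because we may always drop to the universal minimizer $C_0$.

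For $n\equiv 0\pmod 6$ we have $n=3k$ with $k$ even (Remark \ref{rmk:econd}), so $C_0=C(\tfrac{3k}{2}+1,\tfrac{3k}{2})$ and the target is $B(k+1,k-1,k+1)$. Applying Lemma \ref{b<c} with the two equal parameters $\tfrac{3k}{2}$ and a bridge of length one gives $\rho(C_0)>\rho(B(\tfrac{3k}{2},1,\tfrac{3k}{2}))$. Both $B(\tfrac{3k}{2},1,\tfrac{3k}{2})$ and $B(k+1,k-1,k+1)$ are of the symmetric type $B(m,p,m)$, so by Lemma \ref{pb} their spectral radii equal those of the theta graphs $P(\tfrac{3k}{2},1,\tfrac{3k}{2})$ and $P(k+1,k-1,k+1)$. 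These share the parameter sum $3k+1$, while the first has the larger value of $\max-\min$ (namely $\tfrac{3k}{2}-1>2$ for $k\ge 4$); hence Lemma \ref{p&c}(1) yields $\rho(P(\tfrac{3k}{2},1,\tfrac{3k}{2}))>\rho(P(k+1,k-1,k+1))$. Chaining the three inequalities gives $\rho(C_0)>\rho(B(k+1,k-1,k+1))$, hence $\rho(G)>\rho(B(k+1,k-1,k+1))$. The case $n\equiv 4\pmod 6$ runs identically: here $n=3k-2$, $C_0=C(\tfrac{3k}{2},\tfrac{3k}{2}-1)$, and Lemma \ref{b<c} gives $\rho(C_0)>\rho(B(\tfrac{3k}{2}-1,1,\tfrac{3k}{2}-1))$; Lemma \ref{pb} passes to the theta graphs $P(\tfrac{3k}{2}-1,1,\tfrac{3k}{2}-1)$ and $P(k-1,k+1,k-1)$, both with parameter sum $3k-1$; and Lemma \ref{p&c}(1) concludes since $\tfrac{3k}{2}-2>2$ for $k\ge 4$.

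The main obstacle is precisely the middle comparison $\rho(C_0)>\rho(B_{\mathrm{target}})$. A direct use of Lemma \ref{b<c} on the target only shows $\rho(B(k+1,k-1,k+1))<\rho(C(2k,k+1))$, comparing the target to an \emph{unbalanced} two-cycle graph; this is the wrong direction, since $C_0$ is the \emph{smallest} two-cycle graph on $n$ vertices and so lies below $C(2k,k+1)$. The resolution is to collapse $C_0$ itself into a symmetric $B(\cdot,1,\cdot)$ via Lemma \ref{b<c}, and only then compare that graph with the target through their theta-graph avatars (Lemma \ref{pb}) using the spread criterion of Lemma \ref{p&c}(1). Verifying that the ``balanced cycles, short bridge'' graph nonetheless has strictly larger spread than the target is exactly where the evenness of $k$ and the bound $k\ge 4$ (equivalently $n\ge 12$ or $n\ge 10$ in the two residue classes) enter. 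The spanning reduction in the first step, by contrast, only uses the edge-deletion and reinsertion operations already established, so I expect it to be routine.
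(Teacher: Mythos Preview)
Your proof is correct and follows essentially the same route as the paper: reduce to a spanning $C(M,Q)$ via edge deletion and the remove/reinsert operation, balance it with Lemma~\ref{p&c}(2), then push through $B(\tfrac{n}{2},1,\tfrac{n}{2})$ via Lemma~\ref{b<c} and compare the corresponding theta graphs with Lemmas~\ref{pb} and~\ref{p&c}(1). The only cosmetic slip is the claim that ``each cycle is an internal path'' in $H$ before the off-cycle vertices are removed---some cycle vertices may still carry tree branches---but there is always an internal path inside each cycle (between the degree-$4$ vertex and the nearest branch point), so the reinsertion step goes through exactly as you intend.
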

\begin{proof}
By Lemmas \ref{it}, \ref{de} and \ref{ies}, one can remove all vertices and edges outside of $C(m, q)$ and reinsert vertices to one of the cycles, for example to the cycle of length $q$, in $C(m, q)$ to get a graph with less spectral radius. Hence we have $\rho(C(m, n-m+1))\le \rho(G)$. Then, 
\begin{align*}
   \rho(G)\ge \rho(C(m, n-m+1))& \ge \rho\left(C\left(\frac{n}{2}+1,\frac{n}{2}\right)\right) & \text{(by Lemma \ref{p&c})} \\
    & >\rho\left(B\left(\frac{n}{2},1,\frac{n}{2}\right)\right) & \text{(by Lemma \ref{b<c})} \\
    & =\rho\left(P\left(\frac{n}{2},1,\frac{n}{2}\right)\right) & \text{(by Lemma \ref{pb})} 
\end{align*}
Since $n\ge 7$, we have $n/2-1>2$. Thus when $n\equiv 0 \pmod{6}$, that is, when $n=3k$, 
$$\rho\left(P\left(\frac{n}{2},1,\frac{n}{2}\right)\right)> \rho(P(k+1,k-1,k+1)) =\rho(B(k+1,k-1,k+1)),$$
by Lemma \ref{pb} and Lemma \ref{p&c}. The case for $n\equiv 4 \pmod{6}$ is similar. 
\end{proof}

\begin{prop}\label{Psub}
Let $n$ be an integer with $n\geq 7$ and $k=\lceil\frac{n}{3}\rceil$ even. Suppose that a connected graph $G$ of order $n$ contains $P^*=P(m^*,p^*,q^*)$ as a subgraph for some integers $m^*,q^*\ge 2$ and $p^*\ge 1$.  
\begin{enumerate}
    \item If $n\equiv 0 \pmod{6}$ and $G\ncong P(k, k+1, k) $, then $\rho(G)\ge \rho(B(k+1,k-1,k+1))$ with equality if and only if $G\cong P(k+1, k-1, k+1)$. 
    \item If $n\equiv 4 \pmod{6}$ and $G\ncong P(k, k-1, k) $, then $\rho(G)\ge\rho(B(k-1,k+1,k-1))$
    with equality if and only if $G\cong P(k-1, k+1, k-1)$.
\end{enumerate}
\end{prop}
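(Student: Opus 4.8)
The plan is to reduce $G$, without raising its spectral radius, to a theta graph $P(m,p,q)$ on $n$ vertices, and then to minimize over such theta graphs. Since $G$ is connected of order $n$ and contains the theta $P^*$, I would first delete (Lemma \ref{de}) any edges lying outside a spanning connected bicyclic subgraph that retains $P^*$; this strictly lowers $\rho$ unless $G$ is already such a graph (a copy of $P^*$ with trees attached). I would then repeatedly apply the remove-and-reinsert move justified by Lemmas \ref{it} and \ref{ies}: take a leaf of one of the hanging trees, delete it, and reinsert it as a degree-$2$ vertex into one of the three internal paths of the theta. Each move preserves the order $n$, keeps the graph connected (we delete only leaves), and cannot land on $\Tilde{D}_n$ (our graphs always contain a cycle, so are never trees), hence strictly decreases $\rho$. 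After finitely many moves every vertex lies on the theta, giving some $P(m,p,q)$ with, by Remark \ref{rmk:econd}, $m+p+q=3k+1$ when $n\equiv 0\ (\mathrm{mod}\ 6)$ and $m+p+q=3k-1$ when $n\equiv 4\ (\mathrm{mod}\ 6)$.

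Next I would minimize $\rho(P(m,p,q))$ over admissible parameters, using that $P(m,p,q)$ is symmetric in its three arguments and that, by Lemma \ref{p&c}(1), for fixed $m+p+q$ the radius $\rho(P(m,p,q))$ increases with $\max(m,p,q)-\min(m,p,q)$. For $m+p+q=3k+1$ the only multiset of imbalance $1$ is $\{k,k,k+1\}$, i.e. the excluded graph $P(k,k+1,k)$; the only multiset of imbalance $2$ is $\{k-1,k+1,k+1\}$, i.e. $P(k+1,k-1,k+1)$; every remaining multiset has imbalance at least $3$. Because $P(k+1,k-1,k+1)$ has the form $P(s,p,s)$ with $s=k+1$, Lemma \ref{pb} gives $\rho(P(k+1,k-1,k+1))=\rho(B(k+1,k-1,k+1))$. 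Thus, once $P(k,k+1,k)$ is set aside, every order-$n$ theta graph has spectral radius at least $\rho(B(k+1,k-1,k+1))$, with equality exactly for $P(k+1,k-1,k+1)$. The case $n\equiv 4\ (\mathrm{mod}\ 6)$ is identical, with $\{k-1,k,k\}$ (the excluded $P(k,k-1,k)$) and $\{k-1,k-1,k+1\}$ (equal to $P(s,p,s)$ with $s=k-1$, matched to $B(k-1,k+1,k-1)$ by Lemma \ref{pb}) playing the two extremal roles.

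Assembling the two steps yields $\rho(G)\ge \rho(P(m,p,q))\ge \rho(B(k+1,k-1,k+1))$, provided the theta we terminate at is not the excluded $P(k,k+1,k)$, and this is precisely the main obstacle. When $G$ has at least one vertex outside its minimal theta, the reinsertion step offers a real choice of which of the three internal paths to lengthen, so I can always steer the terminal parameters to a multiset other than $\{k,k,k+1\}$ (for example by piling the reinserted vertices onto a single path); the first inequality is then strict and the second gives the desired bound, so $\rho(G)>\rho(B(k+1,k-1,k+1))$. If instead $G$ is itself an order-$n$ theta graph, no reduction occurs and the conclusion follows directly from the minimization, with equality exactly when $G\cong P(k+1,k-1,k+1)$.

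The genuinely delicate boundary case is when $V(G)$ coincides with the vertex set of $P(k,k+1,k)$ but $G$ carries extra edges (so $|E(G)|\ge n+2$): here there are no outside vertices to redistribute, and naively deleting back to a bicyclic subgraph may return $P(k,k+1,k)$ itself, giving only $\rho(G)>\rho(P(k,k+1,k))$, which is too weak. To close this case I would use the freedom in the edge-deletion step: any chord added to the balanced theta raises the cyclomatic number, so $G$ admits a bicyclic spanning subgraph built from a \emph{different} theta (obtained by deleting an edge of $P(k,k+1,k)$ rather than the chord), and I would argue that such a subgraph reduces to a theta distinct from $P(k,k+1,k)$, whence $\rho(G)>\rho(B(k+1,k-1,k+1))$. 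Pinning down that this rerouting is always available — and that it never forces equality with an augmented graph — is where the real work of the proof concentrates.
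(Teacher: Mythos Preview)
Your overall strategy matches the paper's: reduce $G$ to an order-$n$ theta $P(m,p,q)$ by edge deletion and the remove--reinsert move, then minimize over such thetas with Lemma~\ref{p&c} and convert to $B$ via Lemma~\ref{pb}. The equality analysis and the treatment of the ``extra edges on $P(k,k+1,k)$'' boundary are also in the same spirit (the paper phrases the latter as: a chord on $P(k,k+1,k)$ forces a strictly smaller theta inside $G$, which one then takes as the working $P^*$).

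There is, however, a genuine gap in your steering argument for the case $n\equiv 0\pmod 6$. You claim that whenever $G$ has at least one vertex outside its minimal theta, you can always reinsert so as to avoid the multiset $\{k,k,k+1\}$. This fails precisely when the minimal theta is $P(k,k,k)$: it already has $3k-1$ vertices, so exactly one vertex must be reinserted, and since all three internal paths have the same length $k$, every choice produces $\{k,k,k+1\}$. Such a $G$ exists (e.g.\ $P(k,k,k)$ with a single pendant), contains no other theta, and is not excluded by the hypothesis $G\ncong P(k,k+1,k)$; your argument yields only $\rho(G)>\rho(P(k,k+1,k))$, which is the wrong direction.

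The paper closes this case by a different route that you do not invoke: from $\rho(G)>\rho(P(k,k,k))=\rho(B(k,k,k))$ one applies Lemma~\ref{b=} with $m=p=q=k$ to get the \emph{equality} $\rho(B(k,k,k))=\rho(B(k,k-1,k+2))$ (note the right-hand graph has one more vertex, so now lives on $n=3k$ vertices), and then Lemma~\ref{bd} with $p=k-1$ fixed gives $\rho(B(k,k-1,k+2))>\rho(B(k+1,k-1,k+1))$. You should insert this step; without it the proof is incomplete. (No analogous issue arises when $n\equiv 4\pmod 6$, since $3a=3k-1$ is impossible and hence an equal-arm theta can never sit one vertex short of $n$.)
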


\begin{proof}
\noindent \textbf{Case 1.} $n\equiv 0 \pmod{6}$

Note that we may assume that $P^* \ncong P(k, k+1, k)$. Indeed, if $G$ contains $P(k, k+1, k)$ as a proper subgraph, that is, $G$ is $P(k, k+1, k)$ with at least one extra edges between vertices, then one can see that $G$ contains $P(m,p,q)$ with $m+p+q < 3k+1$, which can be chosen to be $P^*$. 

Suppose that $m^*+p^*+q^*=3k+1$. Since $P^* \ncong P(k, k+1, k)$, we have $\max(m^*,p^*,q^*)-\min(m^*,p^*,q^*)\ge 2$. Therefore, by Lemma \ref{p&c} and Lemma \ref{pb}, 
\[\rho(G)\ge\rho(P^*)\ge \rho(P(k+1, k-1, k+1)) = \rho(B(k+1, k-1, k+1)).  \]
In this case, the equality holds if and only if $G\cong P(k+1, k-1, k+1)$.

Now suppose that $m^*+p^*+q^*< 3k+1$. If $(m^*,p^*,q^*)\ne (k,k,k)$, by removing all the vertices and edges outside of $P^*$ and reinserting vertices to the longest internal path in $P^*$, one can find $(m',p',q')$ such that $m'+p'+q'=3k+1$, $\max(m',p',q')-\min(m',p',q')\ge 2$ and $\rho(P(m',p',q'))<\rho(P^*)< \rho(G)$.
Hence by the same argument as above, we have $\rho(G)> \rho(B(k+1, k-1, k+1))$. 

Finally, if $(m^*,p^*,q^*)= (k,k,k)$, then by Lemma \ref{pb} and Lemma \ref{b=}, we have 
\[ \rho(G) > \rho(P(k,k,k)) =\rho(B(k,k,k))= \rho(B(k,k-1, k+2)).\]
Since $\rho(B(k,k-1, k+2)) >\rho(B(k+1,k-1, k+1))$, we are done.

\noindent \textbf{Case 2.} $n\equiv 4 \pmod{6}$

The proof is parallel to Case 1. We may assume that $P^* \ncong P(k, k-1, k)$. Then, by removing all the vertices and edges outside of $P^*$ and reinserting vertices to the longest internal path in $P^*$, one can find $(m',p',q')$ such that $m'+p'+q'=3k-1$, $\max(m',p',q')-\min(m',p',q')\ge 2$ and $\rho(P(m',p',q'))\le \rho(P^*)\le \rho(G)$. Thus
\[\rho(G)\ge\rho(P(m',p',q'))\ge \rho(P(k-1, k+1, k-1)) = \rho(B(k-1, k+1, k-1)).  \]
The equality holds if and only if $G\cong P(k+1, k-1, k+1)$. Therefore, the proof is complete. 
\end{proof}

By Lemma \ref{indepPB}, $P(k, k+1, k)$ and $P(k+1, k-1, k+1)$ in the case $n\equiv 0 \pmod{6}$ and $P(k, k-1, k)$ and $P(k+1, k-1, k+1)$ in the case $n\equiv 4 \pmod{6}$ are not in $\G_{n,\lceil\frac{n}{2}\rceil-1}$. Hence by Proposition \ref{Csub} and Proposition \ref{Psub}, we now assume that the graph $G$ does not contain $C(m,q)$ or $P(m, p, q)$ as a subgraph. This condition is equivalent to the condition that all cycles in $G$ are mutually disjoint. 

\begin{prop}\label{Bsub}
Let $n$ be an integer with $n\geq 7$ and $k=\lceil\frac{n}{3}\rceil$ even. Let $G\in \displaystyle \G_{n,\lceil\frac{n}{2}\rceil-1}$. Suppose that the cycles in $G$ are mutually disjoint.
\begin{enumerate}
    \item When $n\equiv 0 \pmod{6}$, $$\rho(B(k+1,k-1,k+1))\le \rho(G)$$ with equality if and only if $G \cong B(k+1,k-1,k+1) $.
    \item When $n\equiv 4 \pmod{6}$, $$\rho(B(k-1,k+1,k-1))\le \rho(G)$$ with equality if and only if $G \cong B(k-1,k+1,k-1) $. 
\end{enumerate}
\end{prop}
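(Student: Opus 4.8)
The plan is to first reduce an arbitrary $G$ in the class to a ``pure'' bicyclic graph $B(m,p,q)$ of the same order, and then minimize the spectral radius over the admissible triples $(m,p,q)$. Since the cycles of $G$ are mutually disjoint and $G$ is connected, $G$ consists of its two cycles $C_m,C_q$ joined by a tree; the two cycles together with the path between them form a copy of $B(m,p,q)$, and every remaining vertex lies in a pendant tree. Using Lemma~\ref{dep} to pull pendant edges together and Lemmas~\ref{it}, \ref{de} and \ref{ies} to remove the resulting outside vertices and reinsert them into internal paths, I would show that $\rho$ strictly decreases while the order stays $n$, so it suffices to treat $G=B(m,p,q)$. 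By Remark~\ref{rmk:econd} we have $m+p+q=3k+1$ (resp.\ $3k-1$), and since $G\in\G_{n,\lceil n/2\rceil-1}$, Lemma~\ref{indepPB} forces at least two of $m,p,q$ to be odd; as their sum is odd, \emph{all three} of $m,p,q$ are odd.

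Next I would balance the two cycles. Fixing $p$ and $m+q$ and applying Lemma~\ref{bd}, I replace $(m,q)$ by the most balanced pair of odd integers with the same sum. Writing $r=(m+q)/2$, this yields $B(r,p,r)$ when $m+q\equiv2\pmod4$ (so $r$ is odd) and $B(r-1,p,r+1)$ when $m+q\equiv0\pmod4$ (so $r$ is even); in either case $\rho$ does not increase, and strictly decreases unless the cycles were already balanced.

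In the perfectly balanced case $B(r,p,r)$ I would finish with Lemmas~\ref{pb} and \ref{p&c}: since $\rho(B(r,p,r))=\rho(P(r,p,r))$ and $2r+p$ is fixed, $\rho(P(r,p,r))$ decreases as $|r-p|$ decreases, and among \emph{odd} $r$ the minimum of $|r-p|$ is attained exactly at $r=k+1$ (resp.\ $r=k-1$), which is the claimed minimizer, every other odd $r$ being strictly worse. For the near-balanced case $B(r-1,p,r+1)$ I would apply Lemma~\ref{bd} once more to over-balance to $B(r,p,r)$ (now with even cycles), strictly decreasing $\rho$, and again pass to $P(r,p,r)$. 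For every even $r\neq k$ one checks $|r-p|\ge5>2$, so Lemma~\ref{p&c} gives $\rho(P(r,p,r))>\rho(P(k+1,k-1,k+1))=\rho(B(k+1,k-1,k+1))$, and we are done.

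The delicate point, which I expect to be the main obstacle, is the single remaining even value $r=k$. Here over-balancing is too weak: it only bounds $\rho$ below by $\rho(B(k,k+1,k))$, the \emph{global} bicyclic minimizer of order $n$ from Lemma~\ref{b26}, whose spectral radius is strictly smaller than that of our target. The key observation is that the $r=k$ graph has two equal parameters---it is $B(k-1,k+1,k+1)$ when $n\equiv0$ and $B(k-1,k-1,k+1)$ when $n\equiv4$---so Lemma~\ref{bmpm} applies directly: $\rho(B(k+1,k-1,k+1))<\rho(B(k+1,k+1,k-1))=\rho(B(k-1,k+1,k+1))$ in the first case, and $\rho(B(k-1,k+1,k-1))<\rho(B(k-1,k-1,k+1))$ in the second. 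Thus the minimizer strictly beats this last competitor, and collecting the strict inequalities above gives equality in the proposition exactly when $G$ is the stated graph. A secondary point demanding care throughout is that the reinsertion moves in the first paragraph must respect the independence-number constraint (a naive subdivision changes $\alpha$), which is precisely what pins the admissible triples to the all-odd ones rather than to the global optimum.
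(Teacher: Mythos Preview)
Your analysis of the all-odd bicyclic graphs $B(m,p,q)$ is correct and tracks the paper's Subcases~1a/2a closely; the use of Lemma~\ref{bmpm} for the lone ``$r=k$'' competitor is exactly right. The genuine gap is the reduction step. You assert ``it suffices to treat $G=B(m,p,q)$'' and then invoke the independence-number constraint to force all three parameters odd---but that constraint applies to the \emph{original} $G$, not to whatever $B(m',p',q')$ you land on after removing and reinserting vertices. Reinsertion into an internal path is a subdivision, which in general changes $\alpha$; nothing guarantees the resulting $B$-graph lies in $\G_{n,\lceil n/2\rceil-1}$, and if it does not, it could well be the global bicyclic minimizer $B(k,k\pm1,k)$, whose spectral radius is strictly below your target. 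So your final sentence (``the reinsertion moves must respect the independence-number constraint'') names the obstruction but does not remove it: for instance, if the induced $B$-subgraph of $G$ happens to be $B(k,k,k)$ with one extra vertex outside, there is \emph{no} way to reinsert that vertex and make all three parameters odd.

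The paper handles this by never claiming the reduced graph stays in the class. Instead it fixes the parameters $(m^*,p^*,q^*)$ of the induced $B$-subgraph of the \emph{original} $G$ and splits on them: if $p^*\notin\{k,k+1\}$ one reinserts so that the new path length $p'$ is odd and $\ne k+1$, forcing $|\tfrac{m'+q'}{2}-p'|\ge2$; if $p^*\in\{k,k+1\}$ one reinserts so that $p'=k+1$ but $m'\ne q'$, and then applies Lemmas~\ref{bd} and~\ref{bmpm}. The single configuration $(m^*,p^*,q^*)=(k,k,k)$ (resp.\ $(k,k-2,k)$ when $n\equiv4$) where neither reinsertion scheme works is ruled out \emph{a priori}: there $G$ is $B(k,k,k)$ plus one extra vertex, and since $\alpha(B(k,k,k))=n/2$ by Lemma~\ref{indepPB}, any maximum independent set of $B(k,k,k)$ remains independent in $G$, contradicting $\alpha(G)=n/2-1$. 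This last argument is the missing piece in your plan. (A small additional point: $G$ may have more than two disjoint cycles, so ``$G$ consists of its two cycles joined by a tree'' is not literally correct; one should simply choose an induced $B(m^*,p^*,q^*)$ subgraph, as the paper does.)
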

\begin{proof}
Since $G\in \displaystyle \G_{n,\lceil\frac{n}{2}\rceil-1}$ has at least $n+1$ edges, $G$ contains at least two disjoint cycles. So, $G$ contains $B^*=B(m^*,p^*,q^*)$ for some integers $m^*,q^*\ge 3$ and $p^*\ge 1$. We choose $B^*$ having the minimum $p^*$ so that $B^*$ is an induced subgraph of $G$.

\noindent \textbf{Case 1.} $n\equiv 0 \pmod{6}$

\textbf{Subcase 1a.} $m^*+p^*+q^*=3k+1$.

Since $B^*$ is an induced subgraph of $G$, we have $G=B^*=B(m^*,p^*,q^*)$ and moreover $m^*,p^*,q^*$ are all odd by Lemma \ref{indepPB}. 

First, assume that $p^*= k+1$. Then $(m^*, q^*)\ne (k,k)$ because $k$ is even. 
Thus, 
\begin{align*}
    \rho(G)= \rho(B(m^*,k+1,q^*)) & \ge \rho(B(k+1,k+1,k-1)) & \text{(by Lemma \ref{bd})} \\
    & >\rho(B(k+1,k-1,k+1))& \text{(by Lemma \ref{bmpm})} 
\end{align*}
Now assume $p^*\neq k+1$. Then we have $|\frac{m^*+q^*}{2}-p^*|\geq 2$. So, 
\begin{align*}
    \rho(G)= \rho(B(m^*,p^*,q^*)) & \ge \rho(B(\frac{m^*+q^*}{2},p^*,\frac{m^*+q^*}{2})) & \text{(by Lemma \ref{bd})} \\
    & =\rho(P(\frac{m^*+q^*}{2},p^*,\frac{m^*+q^*}{2}))& \text{(by Lemma \ref{pb})} \\
    & \ge \rho(P(k+1,k-1,k+1)) & \text{(by Lemma \ref{p&c})} \\
    & \ge \rho(B(k+1,k-1,k+1)). & \text{(by Lemma \ref{pb})}
\end{align*}

\textbf{Subcase 1b.} $m^*+p^*+q^*<3k+1$.

First assume that $p^*\ne k$ or $k+1$. Then it is not hard to see that by removing all vertices outside of $B^*$ and reinserting them to internal paths in $B^*$, one can find $(m',p',q')$ such that $m'+p'+q'=3k+1$, $p'$ is odd, $p'\ne k+1$ and $\rho(B(m',p',q'))<\rho(B^*)< \rho(G)$. Then we have $|\frac{m'+q'}{2}-p'|\geq 2$ and by the similar argument as above, 
\[\rho(G) >\rho(B(m',p',q'))\ge \rho(B(k+1,k-1,k+1)).  \]

Now assume that $p^*=k$ or $k+1$. Suppose that $(m^*, p^*, q^*)\ne (k,k,k)$, Then by removing all vertices outside of $B^*$ except one and reinserting them to internal paths in $B^*$, one can find $(m',p',q')$ such that $m'+p'+q'=3k+1$, $p'=k+1$, $m'\ne q'$ and $\rho(B(m',p',q'))<\rho(B^*)\le \rho(G)$. Without loss of generality, we may assume $m'< q'$. Since $m'+q'=2k$, we have $m'\le k-1< k+1\le q' $. Thus, 
\begin{align*}
    \rho(G)> \rho(B(m',p',q')) & \ge \rho(B(k-1,k+1,k+1))) & \text{(by Lemma \ref{bd})} \\
    & > \rho(B(k+1,k-1,k+1)) & \text{(by Lemma \ref{bmpm})}
\end{align*}

Finally, suppose that $(m^*, p^*, q^*)=(k,k,k)$. In this case, the graph $G$ is the graph $B(k, k, k)$ with an extra vertex adjacent to some vertices of $B(k, k, k)$. By Lemma \ref{indepPB}, $\alpha(B(k, k, k))= \lceil \frac{n-1}{2}\rceil = \frac{n}{2}$. Since an independent set in $B(k, k, k)$ remains independent in $G$, this implies $\alpha(G)\ge\frac{n}{2} $, which is a contradiction. 

Therefore, we conclude that $\rho(G)\ge \rho(B(k+1,k-1,k+1))$ and we can check in the proof that the equality holds if and only if $G\cong B(k+1,k-1,k+1)$. 

\noindent \textbf{Case 2.} $n\equiv 4\pmod{6}$

\textbf{Subcase 2a.} $m^*+p^*+q^*=n+1=3k-1$.

By choice of $B^*$, $G$ should not have extra edges other than edges in $B^*$. So, we have $G=B^*=B(m^*,p^*,q^*)$ and moreover $m^*,p^*,q^*$ are all odd by Lemma \ref{indepPB}. 

First assume that $p^*= k-1$. Then $(m^*, q^*)\ne (k,k)$ because $k$ is even. 
Thus, 
\begin{align*}
    \rho(G)= \rho(B(m^*,k-1,q^*)) & \ge \rho(B(k+1,k-1,k-1)) & \text{(by Lemma \ref{bd})} \\
    & >\rho(B(k-1,k+1,k+1))& \text{(by Lemma \ref{bmpm})} 
\end{align*}
Now assume $p\neq k-1$. Then we have $|\frac{m^*+q^*}{2}-p^*|\geq 2$. So, 
\begin{align*}
    \rho(G)= \rho(B(m^*,p^*,q^*)) & \ge \rho(B(\frac{m^*+q^*}{2},p^*,\frac{m^*+q^*}{2})) & \text{(by Lemma \ref{bd})} \\
    & =\rho(P(\frac{m^*+q^*}{2},p^*,\frac{m^*+q^*}{2}))& \text{(by Lemma \ref{pb})} \\
    & \ge \rho(P(k-1,k+1,k-1)) & \text{(by Lemma \ref{p&c})} \\
    & \ge \rho(B(k-1,k+1,k-1)). & \text{(by Lemma \ref{pb})}
\end{align*}

\textbf{Subcase 2b.} $m^*+p^*+q^*<3k-1$. 

First assume that $p^*\ne k-2$ or $k-1$. Then it is not hard to see that by removing all vertices outside of $B^*$ and reinserting them to internal paths in $B^*$, one can find $(m',p',q')$ such that $m'+p'+q'=3k-1$, $p'$ is odd, $p'\ne k-1$ and $\rho(B(m',p',q'))<\rho(B^*)\le \rho(G)$. Then we have $|\frac{m'+q'}{2}-p'|\geq 2$ and by the similar argument as above, 
\[\rho(G) >\rho(B(m',p',q'))\ge \rho(B(k-1,k+1,k-1)).  \]

Now assume that $p^*=k-2$ or $k-1$. Suppose that $(m^*, p^*, q^*)\ne (k,k-2,k)$, Then by removing all vertices outside of $B^*$ except one and reinserting them to internal paths in $B^*$, one can find $(m',p',q')$ such that $m'+p'+q'=3k-1$, $p'=k-1$, $m'\ne q'$ and $\rho(B(m',p',q'))<\rho(B^*)\le \rho(G)$. Without loss of generality, we may assume $m'< q'$. Since $m'+q'=2k$, we have $m'\le k-1< k+1\le q' $. Thus, 
\begin{align*}
    \rho(G)> \rho(B(m',p',q')) & \ge \rho(B(k-1,k-1,k+1))) & \text{(by Lemma \ref{bd})} \\
    & > \rho(B(k+1,k-1,k+1)) & \text{(by Lemma \ref{bmpm})}
\end{align*}

Finally, suppose that $(m^*, p^*, q^*)=(k,k-2,k)$. In this case, the graph $G$ is the graph $B(k, k-2, k)$ with an extra vertex adjacent to some vertices of $B(k, k-2, k)$. By Lemma \ref{indepPB}, $\alpha(B(k, k-2, k))= \lceil \frac{n-1}{2}\rceil = \frac{n}{2}$. Since an independent set in $B(k, k-2, k)$ remains independent in $G$, this implies $\alpha(G)\ge\frac{n}{2} $, which is a contradiction.

Therefore, we conclude that $\rho(G)\ge \rho(B(k-1,k+1,k-1))$ and we can check in the proof that the equality holds if and only if $G\cong B(k-1,k+1,k-1)$. 
\end{proof}

By Proposition \ref{Csub}, Proposition \ref{Psub} and Proposition \ref{Bsub}, the proof of Theorem \ref{mainthm} is complete. 


\begin{thebibliography}{123}
\bibitem{sg} A.E. Brouwer and W.H. Haemers, Spectra of Graphs, Springer, New York, 2011.

\bibitem{gs} D. Cvetkovic, P. Rowlinson, S. Simi\'{c}, An introduction to the Theory of Graph Spectra, Cambridge University Press, Cambridge, 2010.

\bibitem{biblock}
J. Das, S. Mohanty, On the spectral radius of bi-block graphs with given independence number $\alpha$, Appl. Math. Comput. 402(2021) 125912.

\bibitem{ds} X. Du, L. Shi, Graphs with small independence number minimizing the spectral radius, Discrete Math. Algorithms Appl. 5 (2013) 1350017.

\bibitem{GR}
C. Godsil and G. Royle, Algebraic Graph Theory, Graduate Texts in Mathematics, 207. Springer-Verlag, New York, (2001).

\bibitem{ie} A. J. Hoffman, J. H. Smith, On the spectral radii of topologically equivalent graphs, in:  Recent advances in Graph Theory, (Proceedings of the Symposium held in Prague, June 1974.)(ed. M. Fiedler) Academic, Prague, 1975, pp.273-281.


\bibitem{msr}
Y. Hu, Q. Huang and Z. Lou, Graphs with the minimum spectral radius for given independence number, 
arXiv:2206.09152.

\bibitem{kn}
Y.L. Jin, X.D. Zhang, The sharp lower bound for the spectral radius of connected graphs with the independence number, Taiwan. J. Math. 19(2) (2015) 419-431.

\bibitem{sr} Z. Lou, J. Guo, The spectral radius of graphs with given independence number, Discrete Mathematics 345 (2022) 112778.

\bibitem{bc} S. K. Simi\'{c}, On the largest eigenvalue of bicyclic graphs, Publ. Inst. Math. (Beograd) 46 (60), (1989), 1-6.

\bibitem{hg} S. K. Simi\'{c}, V. Lj. Koci\'{c}, On the largest eigenvalue of some homeomorphic graphs, Publ. Inst. Math. (Beograd) 40 (54) (1986), 3-9.

\bibitem{sp} J.H. Smith, Some properties of the spectrum of a graph, in: R. Guy et al.(Eds.), Combinatorial Structures and their applications, Proc. Conf. Calgary, 1969, Gordon and Breach, New York, 1970, pp. 403-406.

\bibitem{de} D. Stevanovic, Spectral Radius of Graphs, Academic Press, Amsterdam, 2015.

\bibitem{xhs} M.M. Wu, Y. Hong, J.L. Shu, M.Q. Zhai, The minimum spectral radius of graphs with a given independence number, Linear Algebra and its Appl. 431 (2009) 937-945.

\bibitem{W} D.B. West, Introduction to Graph Theory, Prentice Hall, Inc., Upper Saddle River, NJ, (2001).

\bibitem{pendant} B. Wu, E. Xiao, Y. Hong, The spectral radius of trees on $k$ pendant vertices, Linear Algebra and its Appl. 395 (2005) 343-349.

\end{thebibliography}
\end{document}